\documentclass[reqno,a4paper,oneside]{amsart}
\usepackage[T1]{fontenc}
\usepackage[utf8]{inputenc}
\usepackage[english]{babel}
\usepackage{microtype}
\usepackage[paperwidth=7in, paperheight=10in, margin=.875in]{geometry}
\usepackage{emptypage}
\usepackage{bm}
\usepackage{enumerate}
\usepackage{amsfonts}
\usepackage{amsmath}
\usepackage{amsthm}
\usepackage{amssymb}
\usepackage{mathtools}
\usepackage{mathrsfs}
\usepackage{dsfont}
\usepackage{graphicx}
\usepackage{subfig}
\usepackage{caption}
\usepackage{cases}
\usepackage{lipsum}
\usepackage{afterpage}
\usepackage[dvipsnames]{xcolor}
\usepackage{listings}
\usepackage{hyperref}
\DeclarePairedDelimiter{\abs}{\lvert}{\rvert}
\DeclarePairedDelimiter{\norm}{\lVert}{\rVert}

\newcommand{\real}{\mathbb{R}}

\newcommand{\nat}{\mathbb{N}}

\newcommand{\prob}{\mathbb{P}}
\newcommand{\E}{\mathbb{E}}

\newcommand{\barr}[2][3]{{}\mkern#1mu\overline{\mkern-#1mu#2}}

\newcommand{\de}{\mathrm{d}}
\newcommand{\ev}{\mathrm{ev}}

\newcommand{\barX}{\barr{X}}

\newcommand{\barB}{\barr{B}}

\newcommand{\barR}{\barr{R}}
\newcommand{\barp}{\barr{p}}

\newcommand{\tv}{\tilde{v}}
\newcommand{\tsigma}{\tilde{\sigma}}
\newcommand{\tL}{\tilde{L}}

\newcommand{\gm}{\mathfrak{m}}

\newcommand{\cC}{\mathcal{C}}
\newcommand{\cM}{\mathcal{M}}
\newcommand{\cX}{\mathcal{X}}
\newcommand{\cY}{\mathcal{Y}}
\newcommand{\cP}{\mathcal{P}}
\newcommand{\cT}{\mathcal{T}}

\newcommand{\cK}{\mathcal{K}}

\newcommand{\cS}{\mathcal{S}}
\newcommand{\cA}{\mathcal{A}}

\newcommand{\cO}{\mathcal{O}}

\newcommand{\sX}{\mathscr{X}}
\newcommand{\sY}{\mathscr{Y}}
\newcommand{\sF}{\mathscr{F}}

\newcommand{\Law}{\operatorname{Law}}

\newcommand{\diag}{\operatorname{diag}}
\newcommand{\trace}{\operatorname{tr}}
\newcommand{\argmin}{\operatorname{argmin}}
\newcommand{\supp}{\operatorname{supp}}

\numberwithin{equation}{section}

\theoremstyle{plain}
\newtheorem{thm}{Theorem}[section]
\newtheorem{cor}[thm]{Corollary}
\newtheorem{lemma}[thm]{Lemma}
\newtheorem{prop}[thm]{Proposition}
\theoremstyle{definition}
\newtheorem{defn}[thm]{Definition}
\theoremstyle{remark}
\newtheorem{rem}[thm]{Remark}


    \title[An alternative approach to well-posedness of mean-field CBO]{An alternative approach to well-posedness of {M}c{K}ean-{V}lasov equations arising in \\ consensus-based optimization}

    \author[A. Baldi]{Alessandro Baldi}
    \address[Alessandro Baldi]{Dipartimento di Scienze Matematiche ``G. L. Lagrange'', Politecnico di Torino, Corso Duca degli Abruzzi, 24, 10129 Torino, Italy. ORCID: 0009-0006-4778-8431.}
    \email{alessandro.baldi@polito.it}
    
    \keywords{Consensus-Based Optimization, McKean--Vlasov equations, multi-agent systems, interacting particle systems, mean-field limit.}

    \subjclass[2020]{60K35, 65C35, 93A16, 70F45, 35Q93.}

    \date{\today}
    
\begin{document}

    \begin{abstract} In this work we study the mean-field description of Consensus-Based Optimization (CBO), a derivative-free particle optimization method. Such a description is provided by a non-local SDE of McKean--Vlasov type, whose fields lack of global Lipschitz continuity. We propose a novel approach to prove the well-posedness of the mean-field CBO equation based on a truncation argument. The latter is performed through the introduction of a cut-off function, defined on the space of probability measures, acting on the fields. This procedure allows us to study the well-posedness problem in the classical framework of Sznitman. Through this argument, we recover the established result on the existence of strong solutions, and we extend the class of solutions for which pathwise uniqueness holds. 
    \end{abstract}

    \maketitle

    \allowdisplaybreaks
          
\section{Introduction}\label{intro}
Consensus-Based Optimization (CBO) is a particle optimization method, first introduced in \cite{CBO-Originale}, designed to address global optimization problems of the form
\begin{equation}\label{eq:optimization}
    \min_{x \in \real^d} f(x),
\end{equation}
where $f \colon \real^d \to \real$ is a, possibly non-convex and non-differentiable, continuous \emph{objective function}. The CBO method consists of a population of $N \in \nat_+$ agents, whose microscopic state at time $t \in [0,T]$ is determined by their positions $X^i_t \in \real^d$, $i = 1,\dots,N$, on the domain of the objective function $f$. The initial positions $X^i_0$, $i=1,\dots,N$, of the particles are assumed to be independent and identically distributed with law $\rho_0 \in \cP(\real^d)$. The time evolution of the system is described by the following It\={o} stochastic differential equations
\begin{equation}\label{eq:N_particle_CBO}
    \de X^i_t = -\lambda\bigl(X^i_t - \cM_\beta(\rho^N_t)\bigr)\,\de t + S\bigl(X^i_t - \cM_\beta(\rho^N_t)\bigr)\,\de B^i_t,\qquad i = 1,\dots,N,
\end{equation}
for $t \in [0,T]$, where $T > 0$ is a fixed time horizon. The processes $(B^i_t)_{t \in [0,T]}$, $i = 1,\dots,N$, are $d$-dimensional independent standard Brownian motions, and $\lambda > 0$ is a fixed drift parameter. The point $\cM_\beta(\rho^N_t)$ describes the \emph{instantaneous consensus point} of the population, and it is defined as the convex combination of the positions of the agents
\begin{equation}\label{eq:consensus_point_particle}
    \cM_\beta(\rho^N_t) \coloneqq \frac{\int_{\real^d} x e^{-\beta f(x)}\,\de\rho^N_t(x)}{\int_{\real^d} e^{-\beta f(x)}\,\de\rho^N_t(x)} =
    \sum_{i=1}^N\Biggl( \frac{e^{-\beta f(X^i_t)} }{\sum_{j=1}^N e^{-\beta f(X^j_t)}}\Biggr) X^i_t,
\end{equation}
where $\beta > 0$ is a fixed parameter, and the (random) measure $\rho^N_t = \frac{1}{N} \sum_{j=1}^N \delta_{X^j_t}$ is the \emph{empirical measure} of the system. The instantaneous consensus point encodes the interaction among particles, and it is defined in such a way that the agents located at positions where 
$f$ attains lower values contribute more to the weighted average in \eqref{eq:consensus_point_particle}. 
Furthermore, it acts as an instantaneous guess for the global minimizer of $f$; its form is justified by the fact that, when $\beta$ is large, \eqref{eq:consensus_point_particle} provides a good approximation of $\argmin_{i=1,\dots,N}f(X^i_t)$.
The map $S\colon\real^d\to\real^{d\times d}$ modulates the diffusive effects, and we assume that it is globally Lipschitz continuous and satisfies the condition $S(0) = 0_{d \times d}$, where $0_{d \times d}$ is the zero matrix. This choice includes, as special cases, both the original \emph{isotropic} CBO \cite{CBO-Originale}, where $S(u) = \sqrt{2\theta}\norm{u} I_d$ (with $\theta > 0$), and the \emph{anisotropic} CBO \cite{CBOCarrilloAnisotropic}, where $S(u) = \sqrt{2\theta}\diag(u)$. 

The stochastic evolution \eqref{eq:N_particle_CBO} features a drift term driving particles towards the consensus point, and a diffusive term allowing them to explore the state space and, at the same time, to escape local minima. The aim of the method is to induce concentration (consensus) of the agents around a global minimizer of $f$ after a sufficiently long time. In the computational practice, an Euler--Maruyama time discretization of \eqref{eq:N_particle_CBO} is typically implemented. See the review \cite{CBOTotzeckReview} for more details about the ideas and motivations underlying the CBO method, as well as variants and scenarios of application. We also mention that, since its introduction, the CBO paradigm has given rise to a variety of methods aimed at tackling diverse classes of optimization problems, including, but not limited to, constrained optimization \cite{CBOConstrained, CBOSphere, CBOSphereAnisotropic}, optimal control \cite{CBOHertyModPredContr}, and multi-objective optimization \cite{CBOHertyMultiObj, CBOTotzeckMultiObj}; see also \cite{fornasier2026CBOevolutionstrategies, fornasier2025regularitypositivityCBO} for comprehensive reviews of applications.

The CBO method exhibits two remarkable features: it achieves very good performance in computational optimization \cite{CBOCarrilloAnisotropic, CBOTotzeckReview}, even when the dimension $d$ of the state space is large, and it is amenable to a rigorous theoretical study by means of techniques pertaining to the mathematical analysis of interacting multi-agent systems. A powerful tool in the study of particles interacting through their empirical measure is \emph{mean-field analysis}. One allows the number $N$ of agents to go to infinity, and seeks to approximate the coupled particle dynamics \eqref{eq:N_particle_CBO} with the statistical behaviour of a representative particle. The \emph{mean-field equation} for the CBO system, which describes the time evolution of the representative particle $\barX_t$, reads
\begin{equation}\label{eq:mean_field_CBO}
\begin{dcases}
    \de \barX_t = -\lambda\bigl(\barX_t - \cM_\beta(\rho_t)\bigr)\,\de t + S\bigl(\barX_t - \cM_\beta(\rho_t)\bigr)\,\de \barB_t, \\
    \rho_t = \Law(\barX_t),
\end{dcases}
\end{equation}
for $t \in [0,T]$, subject to a $\rho_0$-distributed initial state $\barX_0$. Here, the consensus point is 
\begin{equation}\label{eq:consensus_point}
    \cM_\beta(\rho_t) = \frac{\int_{\real^d} x e^{-\beta f(x)}\,\de\rho_t(x)}{\int_{\real^d} e^{-\beta f(x)}\,\de\rho_t(x)}.
\end{equation}
The mean-field equation \eqref{eq:mean_field_CBO} is obtained by formally replacing the empirical measure $\rho^N_t$ in \eqref{eq:N_particle_CBO} with the measure $\rho_t = \Law(\barX_t)$, thus uncoupling the interactions in the particle system. A rigorous justification of this formal procedure amounts to proving a \emph{propagation of chaos} result. This was first achieved through a compactness argument in \cite{CBOHuangFirstPropChaos}; stronger quantitative results have been obtained in the recent works by means of a synchronous coupling approach, which allows one to obtain uniform-in-time estimates; see \cite{gerber2026uniformintimepropagationchaosCBO, MeanFieldHoff, CBOKaliseJump} and references therein.

In the mean-field model, the fact that \eqref{eq:consensus_point} provides a reasonable approximation for the global minimizer of $f$ is heuristically corroborated by the well-known \emph{Laplace principle} \cite{DemboZeitouni,MillerAsint,CBO-Originale}, according to which, if $\rho \in \cP(\real^d)$ is absolutely continuous,
\begin{equation*}
    \lim_{\beta\to+\infty}\biggl(-\frac{1}{\beta}\log\biggl(\int_{\real^d}e^{-\beta f(x)}\,\de \rho(x)\biggr)\biggr) = \inf_{x \in \supp(\rho)} f(x),
\end{equation*}
suggesting that, when $f$ admits a unique global minimizer $x_*$, the weighted measure $e^{-\beta f(x)} \rho/(\int_{\real^d} e^{-\beta f(x)} \de\rho(x))$ is a proxy for $\delta_{x_*}$ for large values of $\beta$, so that $\cM_\beta(\rho) \approx x_*$.

The mean-field SDE \eqref{eq:mean_field_CBO} is of \emph{McKean--Vlasov type}, since its fields depend on the law of the solution itself. The study of \eqref{eq:mean_field_CBO} (and of its associated Fokker--Planck equation) allows to deduce the long-time behaviour of the CBO method at an aggregate level: the works \cite{CBO-Carrillo, CBOCarrilloAnisotropic, CBOFornasierConvergenceAnisotropic, CBOFornasierConvergence} establish conditions under which the mean-field law $\rho_t$ concentrates around the global minimizers of the objective function $f$, thus providing theoretical guarantees for the efficacy of the CBO methodology.

The present work is devoted to the presentation of a novel technique to prove the well-posedness of the mean-field CBO equation \eqref{eq:mean_field_CBO}. This result was originally achieved in the seminal work \cite{CBO-Carrillo} through an application of the Leray--Schauder fixed-point theorem in the space of curves $C([0,T],\real^d)$. Here, we propose an alternative strategy based on a \emph{cut-off} function defined on the space of probability measures, by means of which we suitably truncate the fields driving equation \eqref{eq:mean_field_CBO}. The truncated fields turn out to be globally Lipschitz continuous, so that the existence of strong solutions to the associated McKean--Vlasov SDE can be obtained within the classical framework of Sznitman \cite{Sznitman}. Finally, the truncation is relaxed until a strong solution to the original mean-field CBO equation \eqref{eq:mean_field_CBO} is recovered.

Moreover, the present argument allows us to improve the pathwise uniqueness results available in the literature. Indeed, we prove that pathwise uniqueness holds in the class of strong solutions for which the map $t \mapsto \cM_\beta(\rho_t)$ is bounded over $[0,T]$, thus relaxing the continuity requirement on the aforementioned map (see, e.g., \cite[Theorem 2.3]{MeanFieldHoff} for comparison). We now present the statement of the main result.

\begin{thm}\label{thm:MainResult} 
Let $f \in \cO(s,\ell)$, with $s,\ell\ge0$, and $p \ge 2 \vee p_\cM(s,\ell)$ (see Definition \ref{def:ClassO_sl} and \eqref{eq:p_M} below). Given a $d$-dimensional standard Brownian motion $\barB$ defined on a filtered probability space $(\Omega, \sF, (\sF_t)_{t\in[0,T]},\prob)$, and an $\sF_0$-measurable random variable $\barX_0\colon\Omega\to\real^d$ belonging to $L^p(\Omega,\sF, \prob)$, there exists a strong solution $\barX \colon \Omega \to C\bigl([0,T],\real^d\bigr)$ to \eqref{eq:mean_field_CBO} with initial state $\barX_0$. The solution $\barX$ is pathwise unique in the class of strong solutions to \eqref{eq:mean_field_CBO} such that the map $t \mapsto \cM_\beta(\rho_t)$ is bounded over $[0,T]$.
\end{thm}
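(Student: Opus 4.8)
The plan is to follow exactly the truncation strategy announced in the introduction, so I would structure the proof in three stages: (i) construct a cut-off modification of the consensus map on $\cP_p(\real^d)$ whose associated drift and diffusion coefficients are globally Lipschitz in both the spatial and the measure variable (with respect to a suitable Wasserstein metric); (ii) apply the classical Sznitman fixed-point argument to obtain, for each truncation level $R$, a unique strong solution $\barX^R$ of the truncated McKean--Vlasov SDE with the given initial datum $\barX_0 \in L^p$; (iii) derive $R$-uniform moment bounds on $\barX^R$ (or rather on $\sup_{t}\E\norm{\barX^R_t}^p$ and on $t \mapsto \cM_\beta(\rho^R_t)$) that allow one to choose $R$ large enough, depending only on $T$, the data, and the class parameters $s,\ell,p$, so that the cut-off is never active along the solution; this yields a genuine strong solution of \eqref{eq:mean_field_CBO}. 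I would phrase the cut-off as acting on the scalar quantity $\int e^{-\beta f}\,\de\mu$ (bounding it away from $0$) and/or on the $p$-th moment of $\mu$, since these are precisely the quantities whose control is needed to make $\cM_\beta$ Lipschitz, and I would invoke Definition \ref{def:ClassO_sl} and the threshold $p_\cM(s,\ell)$ in \eqref{eq:p_M} to get the polynomial growth estimate $\norm{\cM_\beta(\mu)} \lesssim 1 + (\int \norm{x}^p\,\de\mu)^{1/p}$ that makes the moment estimate close.

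For the moment bound in step (iii) I would apply It\^o's formula to $\norm{\barX^R_t}^p$, use the linear-in-$x$ structure of the drift $-\lambda(x - \cM_\beta(\rho^R_t))$ and the Lipschitz, linear-growth diffusion $S(x - \cM_\beta(\rho^R_t))$ (with $S(0)=0$), together with the bound on $\cM_\beta$, to obtain a Gronwall-type inequality $\frac{\de}{\de t}\E\norm{\barX^R_t}^p \le C(1 + \E\norm{\barX^R_t}^p)$ with $C$ independent of $R$; then $\sup_{t\in[0,T]}\E\norm{\barX^R_t}^p \le C_T(1 + \E\norm{\barX_0}^p)$, which in turn bounds $\sup_t \int\norm{x}^p\,\de\rho^R_t$ and hence $\sup_t \norm{\cM_\beta(\rho^R_t)}$ by a constant $K$ depending only on the data. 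Choosing the truncation radius $R > K$ (and, for the $\int e^{-\beta f}$ part, using that this integral is bounded below in terms of the same moment, via the growth assumption on $f$) guarantees that $\rho^R$ stays in the region where the cut-off is inactive, so $\barX^R$ solves \eqref{eq:mean_field_CBO}. Sample-path continuity and $\barX \in C([0,T],\real^d)$ come for free from the SDE having continuous coefficients along the solution and a continuous version of the stochastic integral.

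For pathwise uniqueness in the stated class, let $\barX$ and $\barY$ be two strong solutions driven by the same $\barB$ with the same initial datum, and assume $t \mapsto \cM_\beta(\rho_t)$ and $t\mapsto \cM_\beta(\sigma_t)$ are bounded on $[0,T]$, say by some constant $M$. The key point is that on the sublevel set of measures with $\norm{\cM_\beta(\cdot)} \le M$ the consensus map is Lipschitz with respect to $W_1$ (or $W_2$) restricted to that set -- this should follow from the same computation as in step (i), since boundedness of $\cM_\beta(\rho_t)$ is exactly what was being enforced by the cut-off. Then a standard synchronous-coupling estimate: apply It\^o to $\norm{\barX_t - \barY_t}^2$, use Lipschitz continuity of $S$ and of $u\mapsto \lambda u$, bound $\norm{\cM_\beta(\rho_t) - \cM_\beta(\sigma_t)} \le L\, W_1(\rho_t,\sigma_t) \le L\,\E\norm{\barX_t - \barY_t}$, take expectations and apply Gronwall to conclude $\E\sup_{t\le T}\norm{\barX_t - \barY_t}^2 = 0$. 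Here one uses the Burkholder--Davis--Gundy inequality to pass from $\E\norm{\barX_t-\barY_t}^2$ to $\E\sup_{t}$. The main obstacle, and the place I would expect the most care to be needed, is making the Lipschitz estimate for $\cM_\beta$ on the restricted set of measures quantitatively correct: one must control the denominator $\int e^{-\beta f}\,\de\mu$ from below and the numerator's variation in terms of $W_p$, and check that $p \ge p_\cM(s,\ell)$ is exactly the integrability threshold that makes these estimates valid for $f \in \cO(s,\ell)$ -- this is where the specific growth class and the choice of $p$ enter decisively, and it is the technical heart of both the existence and the uniqueness parts.
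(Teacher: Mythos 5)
Your existence argument is essentially the paper's: a cut-off acting on $\gm_p(\mu)$ (the moment truncation alone suffices; the lower bound on $\int e^{-\beta f}\,\de\mu$ is already encoded in the stability estimate \eqref{eq:LocLipConsensusPoint} of Proposition~\ref{prop:PropConsensusPointHoff} through the growth condition \eqref{eq:O2}), Sznitman's fixed point for each $R$, an It\^o/Gr\"onwall bound on $\gm_p^p(\rho^R_t)$ that is uniform in $R$, and a choice of $R$ large enough that the truncation is never active. That part is sound and matches Sections~\ref{sec:Existence}.

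The uniqueness part, however, contains a genuine gap. You claim that $\cM_\beta$ is Lipschitz (with respect to $W_1$ or $W_2$) on the sublevel set $\{\mu : \norm{\cM_\beta(\mu)} \le M\}$, arguing that ``boundedness of $\cM_\beta(\rho_t)$ is exactly what was being enforced by the cut-off.'' It is not: the cut-off enforces a bound on $\gm_p(\mu)$, and the available stability estimate \eqref{eq:LocLipConsensusPoint} holds on $\cP_{p,R}(\real^d)$ as defined in \eqref{eq:P_pR}, with a Lipschitz constant depending on the moment bound $R$ and with respect to $W_p$ for $p \ge p_\cM(s,\ell)$. Boundedness of $\norm{\cM_\beta(\mu)}$ does not imply any bound on $\gm_p(\mu)$ (the implication goes the other way, via \eqref{eq:SubConsensusPoint}), so the set you restrict to contains measures with arbitrarily large $p$-th moments, on which no uniform Lipschitz constant for $\cM_\beta$ is available. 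A second, related mismatch: when $\ell = 0$ one has $p_\cM(s,\ell) = s+2$, so the stability estimate requires $W_p$ with possibly $p > 2$; a Gr\"onwall loop run on $\E\norm{\barX_t - \barY_t}^2$ with $W_1$ or $W_2$ does not close, since $W_1 \le W_p$ gives control in the wrong direction. The missing step is precisely the \emph{a priori} estimate of Proposition~\ref{prop:APrioriEstimateCBO}: if $t \mapsto \cM_\beta(\rho_t)$ is bounded, the fields are sublinear uniformly in time, hence $\E\bigl[\sup_{t\in[0,T]}\norm{\barX_t}^p\bigr] < \infty$, so both laws stay in some $\cP_{p,\barR}(\real^d)$. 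With that in hand you can either run your synchronous-coupling Gr\"onwall argument at the level of $p$-th moments and $W_p$, or, as the paper does, observe that both solutions then solve the $\barR$-truncated problem and invoke the pathwise uniqueness of Proposition~\ref{prop:WellPosednessRTrunc}. The repair uses only tools you already deployed in the existence step, but as written the uniqueness proof does not go through.
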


We remark that, while the core content of Theorem~\ref{thm:MainResult} is well-known in the literature, the main goal of this work is to illustrate an alternative technical possibility in the study of the well-posedness of mean-field equations arising in CBO-type models.

\subsection{Plan of the paper.} This paper is structured as follows. In Section~\ref{sec:WellPosednessArguments}, we recall Sznitman's well-posedness argument \cite{Sznitman} for McKean-Vlasov-type SDEs with globally Lipschitz fields, we review the established approach for the CBO equation \eqref{eq:mean_field_CBO}, and we outline our alternative approach. In Section~\ref{sec:StructuralAssumptions}, we present our structural assumptions on the objective function $f$, together with their consequences on the structure of the fields driving \eqref{eq:mean_field_CBO}; here, we recall the relevant stability and sublinearity estimates available in the literature. The argument for the existence of solutions via truncation is presented in Section~\ref{sec:Existence}, whereas Section~\ref{sec:Uniqueness} is devoted to pathwise uniqueness. We conclude the paper with Appendix~\ref{app:SzinitmanProof}, where, for the reader's convenience, we present in detail the version of Sznitman's argument employed in this work.

\subsection{Notation and preliminaries.}\label{subsec:Notation} 
Given a measure space $(\cX, \sX, \mu)$, a measurable space $(\cY, \sY)$, and a measurable function $g \colon \cX \to \cY$, we denote by $g_\sharp \mu$ the \emph{push-forward of $\mu$ through $g$}, defined as $g_\sharp \mu(B) \coloneqq \mu(g^{-1}(B))$, for all $B \in \sY$. By construction, $g_\sharp \mu$ is a measure on $(\cY, \sY)$ having the same mass of $\mu$. When $g \colon \Omega \to \cY$ is a random variable defined on a probability space $(\Omega, \sF, \prob)$, we denote its law $g_\sharp \prob$ by $\Law(g)$. Given a metric space $(\cX, d_\cX)$, we indicate with $\cP(\cX)$ the convex set of Borel probability measures on $\cX$. Moreover, for all $p \ge 1$, we denote with $\cP_p(\cX)$ the collection of all probability measures with finite moment of order $p$, namely
\begin{equation*}
    \cP_p(\cX) \coloneqq \biggl\{ \mu \in \cP(\cX)\;:\, \int_{\cX} d_{\cX}(x,x_0)^p \, \de\mu(x) < +\infty \text{ for some } x_0 \in \cX \biggr\}.
\end{equation*}
For all fixed $p \ge 1$, the set $\cP_p(\cX)$ can be endowed with a metric structure by means of the \emph{Wasserstein distance of order} $p$, defined as
\begin{equation*}
    W_p(\mu_1, \mu_2) \coloneqq \biggl(\inf_{\pi \in \Gamma(\mu_1, \mu_2)} \int_{\cX \times \cX} d_\cX(x_1, x_2)^p\,\de\pi(x_1, x_2) \biggr)^{1/p}
\end{equation*}
for all $\mu_1,\mu_2 \in \cP_p(\cX)$. The set $\Gamma(\mu_1, \mu_2)$ is the collection of \emph{transport plans} (or \emph{couplings}) between the measures $\mu_1$ and $\mu_2$, which is defined as
\begin{equation*}
    \Gamma(\mu_1, \mu_2) \coloneqq \Bigl\{\pi \in \cP(\cX^2) \,:\, \pi(A\times \cX) = \mu_1(A),\; \pi(\cX \times B) = \mu_2(B),\, \text{ for all Borel } A,B\Bigr\}.
\end{equation*}
If the metric space $(\cX, d_\cX)$ is separable and complete, so is $(\cP_p(\cX),W_p)$. Throughout this work, we shall primarily consider the cases where $\cX$ is either the Euclidean space $\real^d$ or the space of continuous functions $C([0,T], \real^d)$ (equipped with the uniform norm), both of which are separable and complete. For more details on the Wasserstein spaces, we refer the reader to the texts \cite{Optimal-Transport-Ambrosio, AGS, Optimal-Transport-Santambrogio, Optimal-Transport-Villani}.

For the sake of conciseness, by denoting with $\norm{x}$ the Euclidean norm of the vector $x\in \real^d$, we adopt the notation
\begin{equation}\label{eq:momentum_def}
    \gm_p(\mu) \coloneqq \biggl(\int_{\real^d} \norm{x}^p\,\de\mu(x)\biggr)^{1/p}, \qquad \text{for all }\mu \in \cP_p(\real^d),
\end{equation}
and, for all fixed $R>0$ and $p \ge 1$, we define the class of measures 
\begin{equation}\label{eq:P_pR}
    \cP_{p,R}(\real^d) \coloneqq \Bigl\{\mu \in \cP(\real^d)\,\, \colon\, \gm_p(\mu) \le R\Bigr\}.
\end{equation}

On the vector space of real square matrices $\real^{d\times d}$, we shall consider the standard Frobenius norm $\norm{\cdot}_F$. If $H \in \real^{d\times d}$ is symmetric and positive semidefinite, we denote by $\sqrt{H}$ its \emph{matrix square root}, namely, the unique symmetric and positive semidefinite matrix $Q \in \real^{d\times d}$ satisfying $Q^\top Q = H$ (for further details, see, e.g., \cite[Chapter 6]{BellmanMatrix}).

We shall indicate by $\ev_t$ the \emph{evaluation map} at time $t$, namely, the map $C([0,T],\real^d) \to \real^d$ such that $\ev_t(\gamma) \coloneqq \gamma(t)$.
The symbols $\nabla \psi$, $H_\psi$, and $\Delta \psi$ will respectively denote the gradient, Hessian matrix, and Laplace operator of a function $\psi\colon\real^d \to \real$.

\section{Strategies for the proof of well-posedness of McKean--Vlasov SDEs}\label{sec:WellPosednessArguments}
Here, we recall the established techniques for the proof of existence and pathwise uniqueness of strong solutions to McKean--Vlasov SDEs with globally Lipschitz fields, as well as the special techniques developed for the CBO mean-field equation \eqref{eq:mean_field_CBO}, where global Lipschitz continuity of the fields is no longer ensured. We conclude the section by presenting an outline of the alternative approach devised in this work, together with the main result.

Throughout this section, we shall  consider a McKean--Vlasov SDE of the form
\begin{equation}\label{eq:McKean-Vlasov}
\begin{dcases}
\de \barX_t = v_{\rho_t}(\barX_t)\,\de t + \sigma_{\rho_t}(\barX_t) \,\de \barB_t\,, \\
\rho_t = \Law(\barX_t),
\end{dcases} 
\qquad\text{ for } t \in [0,T],
\end{equation}
where $\barB$ is a $d$-dimensional standard Brownian motion defined on a filtered probability space $(\Omega, \sF, (\sF_t)_{t\in[0,T]},\prob)$. The dynamics is driven by the fields $v \colon \real^d \times \cP(\real^d) \to \real^d$ and $\sigma \colon \real^d \times \cP(\real^d) \to \real^{d\times d}$, and the initial state of the system is assigned by an $\sF_0$-measurable random variable $\barX_0\colon\Omega\to\real^d$. 

\subsection{The classical proof of Sznitman in the case of globally Lipschitz fields.}\label{subsec:ClassicalSznitman} The study of well-posedness of McKean--Vlasov equations dates back to the work of McKean himself \cite{McKean1967}. Two decades later, in his lecture notes \cite{Sznitman}, Sznitman introduced a technique based on a fixed-point argument on the space of probability measures over the path space. This approach relies on the global Lipschitz  continuity of the fields $v$ and $\sigma$ with respect to both space and measure arguments. More precisely, Sznitman's argument requires the existence of $\barp \ge 1$, and of positive constants $L_v, L_\sigma$, such that, for all $x_1,x_2 \in \real^d$, $\mu_1,\mu_2\in\cP_{\barp}(\real^d)$, 
\begin{subequations}\label{eq:GlobalLip}
\begin{align}
    \norm{v_{\mu_1}(x_1) - v_{\mu_2}(x_2)} &\le L_v\bigl(\norm{x_1 - x_2} + W_{\barp}(\mu_1, \mu_2) \bigr), \label{eq:GlobalLipDrift} \\
    \norm{\sigma_{\mu_1}(x_1) - \sigma_{\mu_2}(x_2)}_F &\le L_\sigma\bigl(\norm{x_1 - x_2} + W_{\barp}(\mu_1, \mu_2) \bigr). \label{eq:GlobalLipDiffCBO}
\end{align}
\end{subequations}
Under the additional assumption that the initial state $\barX_0$ for problem \eqref{eq:McKean-Vlasov} belongs to $L^p(\Omega,\sF,\prob)$, with $p \ge 2\vee\barp$, one can consider a fixed measure $\Psi \in \cP_p(C([0,T],\real^d))$ and formulate the auxiliary stochastic differential equation (with $\barX_0$ as initial state)
\begin{equation}\label{eq:AuxProblem}
\de X^\Psi_t = v_{\Psi_t}(X^\Psi_t)\,\de t + \sigma_{\Psi_t}(X^\Psi_t) \,\de \barB_t\,, \qquad\text{ for } t \in [0,T],   
\end{equation}
where $\Psi_t \coloneqq (\ev_t)_\sharp \Psi$, for all $t \in [0,T]$; by recalling the definition of \emph{push-forward} given at the beginning of Section~\ref{subsec:Notation}, since $\Psi$ is probability measure in $\cP_p(C([0,T],\real^d))$, then one has that $\Psi_t$ is a probability measure belonging to $\cP_p(\real^d)$ for all $t \in [0,T]$.
Then, as a consequence of \eqref{eq:GlobalLip}, one can show that the map $\cS \colon \cP_p(C([0,T],\real^d)) \to \cP_p(C([0,T],\real^d))$ 
\begin{equation}\label{eq:DefS}
\Psi \mapsto \cS(\Psi) \coloneqq \Law(X^\Psi)
\end{equation}
(where $X^\Psi$ is the pathwise unique strong solution to \eqref{eq:AuxProblem}) is well-defined and admits a unique fixed point $\rho \in \cP_p(C([0,T],\real^d))$ through the Banach--Caccioppoli fixed-point theorem (see, \emph{e.g.},\cite{BanachHistory2024}). Since by construction $\cS(\rho) = \Law(X^\rho) = \rho$, taking the push-forward through the evaluation map $\ev_t$ yields that $\rho_t = \Law(X^\rho_t)$ for all $t \in [0,T]$; indeed, by virtue of the associativity of the push-forward operation, we have that
\begin{equation*}
    \rho_t \coloneqq (\ev_t)_\sharp \rho = (\ev_t)_\sharp \Law(X^\rho) = (\ev_t)_\sharp((X^\rho)_\sharp\prob) = (\ev_t \circ X^\rho)_\sharp\prob = (X^\rho_t)_\sharp\prob = \Law(X^\rho_t).
\end{equation*}
Hence, the process $\barX \coloneqq X^\rho$ is a strong solution to the original McKean--Vlasov SDE \eqref{eq:McKean-Vlasov}. Pathwise uniqueness is obtained (in a particular class of strong solutions) through a Gr\"{o}nwall-type argument. For future reference, we state below the version of this classical result that will be used throughout the present work.

\begin{thm}\label{thm:SznitmanArgument} Let us assume that conditions \eqref{eq:GlobalLip} are satisfied, and let $p \ge 2\vee\barp$. Given a $d$-dimensional standard Brownian motion $\barB$ defined on a filtered probability space $(\Omega, \sF, (\sF_t)_{t\in[0,T]},\prob)$, and an $\sF_0$-measurable random variable $\barX_0\colon\Omega\to\real^d$ belonging to $L^p(\Omega,\sF, \prob)$, there exists a strong solution $\barX \colon \Omega \to C\bigl([0,T],\real^d\bigr)$ to problem \eqref{eq:McKean-Vlasov} with initial state $\barX_0$.
Moreover, the solution $\barX$ satisfies
\begin{equation}\label{eq:SznitmanFiniteMoments}
    \E\biggl[\sup_{t \in [0,T]}\norm{\barX_t}^p\biggr] < \infty,
\end{equation}
and it is pathwise unique in the class of strong solutions such that \eqref{eq:SznitmanFiniteMoments} holds.
\end{thm}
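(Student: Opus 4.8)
The plan is to run Sznitman's fixed-point scheme on the path space $\cP_p\bigl(C([0,T],\real^d)\bigr)$ exactly as outlined above, the three ingredients being: well-posedness of the auxiliary equation \eqref{eq:AuxProblem} together with $L^p$ moment bounds, a contraction estimate for the map $\cS$, and a Gr\"onwall step for pathwise uniqueness. \emph{Step 1 (the auxiliary equation).} Fix $\Psi \in \cP_p\bigl(C([0,T],\real^d)\bigr)$ and set $M_\Psi \coloneqq \sup_{t \in [0,T]}\gm_{\barp}(\Psi_t)$. Since $\ev_t$ is $1$-Lipschitz and $\barp \le p$, one has $M_\Psi \le \bigl(\int_{C([0,T],\real^d)}\sup_{t\in[0,T]}\norm{\gamma(t)}^p\,\de\Psi(\gamma)\bigr)^{1/p} < \infty$. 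Taking $x_2 = 0$ and $\mu_2 = \delta_0$ in \eqref{eq:GlobalLip} shows that the coefficients $(t,x)\mapsto v_{\Psi_t}(x)$ and $(t,x)\mapsto\sigma_{\Psi_t}(x)$ of \eqref{eq:AuxProblem} are Lipschitz in $x$ uniformly in $t$ (with constants $L_v$, $L_\sigma$) and of at most linear growth, $\norm{v_{\Psi_t}(x)} \le \norm{v_{\delta_0}(0)} + L_v\bigl(\norm{x} + M_\Psi\bigr)$, and likewise for $\sigma$. Standard SDE theory (Picard iteration for existence and pathwise uniqueness; Burkholder--Davis--Gundy and Gr\"onwall for the moment bound, using $p\ge 2$ and $\barX_0 \in L^p$) then provides a pathwise unique strong solution $X^\Psi$ to \eqref{eq:AuxProblem} with $\E\bigl[\sup_{t\in[0,T]}\norm{X^\Psi_t}^p\bigr] < \infty$. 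Hence $\cS(\Psi) = \Law(X^\Psi) \in \cP_p\bigl(C([0,T],\real^d)\bigr)$, so $\cS$ is a well-defined self-map of $\cP_p\bigl(C([0,T],\real^d)\bigr)$, which is a complete metric space for $W_p$.

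\emph{Step 2 (contraction of a power of $\cS$).} Given $\Psi^1,\Psi^2$, couple $X^{\Psi^1}$ and $X^{\Psi^2}$ through the common Brownian motion $\barB$ and the common initial state $\barX_0$. Writing the difference as a drift integral plus a stochastic integral, applying Burkholder--Davis--Gundy, Jensen, and the Lipschitz bounds \eqref{eq:GlobalLip}, and using $W_{\barp} \le W_p$ together with $W_p(\Psi^1_s,\Psi^2_s) \le \cW_{p,s}(\Psi^1,\Psi^2)$ — where $\cW_{p,t}$ denotes the Wasserstein distance of order $p$ computed with the supremum norm over $[0,t]$ — I obtain, for $u(t) \coloneqq \E\bigl[\sup_{s\le t}\norm{X^{\Psi^1}_s - X^{\Psi^2}_s}^p\bigr]$ (finite by Step 1),
\[
    u(t) \;\le\; C\int_0^t u(s)\,\de s \;+\; C\int_0^t \cW_{p,s}(\Psi^1,\Psi^2)^p\,\de s,
\]
with $C = C(p,T,L_v,L_\sigma)$. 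Gr\"onwall's inequality turns this into $\cW_{p,t}(\cS(\Psi^1),\cS(\Psi^2))^p \le u(t) \le C'\int_0^t \cW_{p,s}(\Psi^1,\Psi^2)^p\,\de s$, the first inequality using the coupling $(X^{\Psi^1},X^{\Psi^2})$. Iterating $n$ times (and using that $s\mapsto\cW_{p,s}$ is nondecreasing) yields $\cW_{p,T}\bigl(\cS^n(\Psi^1),\cS^n(\Psi^2)\bigr)^p \le \tfrac{(C'T)^n}{n!}\,\cW_{p,T}(\Psi^1,\Psi^2)^p$, so for $n$ large $\cS^n$ is a strict contraction and $\cS$ has a unique fixed point $\rho \in \cP_p\bigl(C([0,T],\real^d)\bigr)$ by the Banach--Caccioppoli theorem.

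\emph{Step 3 (conclusion and pathwise uniqueness).} Put $\barX \coloneqq X^\rho$. From $\cS(\rho) = \Law(X^\rho) = \rho$, pushing forward through $\ev_t$ gives $\rho_t = \Law(X^\rho_t) = \Law(\barX_t)$ for every $t\in[0,T]$, so $\barX$ solves \eqref{eq:McKean-Vlasov}, and \eqref{eq:SznitmanFiniteMoments} is the estimate of Step 1 with $\Psi = \rho$. For pathwise uniqueness, let $\barX^1,\barX^2$ be two strong solutions driven by the same $\barB$ with the same initial state $\barX_0$, each satisfying \eqref{eq:SznitmanFiniteMoments}, and write $\rho^i_t = \Law(\barX^i_t)$. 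Arguing as in Step 2, now using the coupling $(\barX^1_s,\barX^2_s)$ to bound $W_{\barp}(\rho^1_s,\rho^2_s) \le W_p(\rho^1_s,\rho^2_s) \le \bigl(\E\norm{\barX^1_s-\barX^2_s}^p\bigr)^{1/p}$, one gets $\E\bigl[\sup_{s\le t}\norm{\barX^1_s-\barX^2_s}^p\bigr] \le C\int_0^t \E\bigl[\sup_{r\le s}\norm{\barX^1_r-\barX^2_r}^p\bigr]\,\de s$; this quantity is finite by \eqref{eq:SznitmanFiniteMoments}, so Gr\"onwall forces it to vanish and $\barX^1$, $\barX^2$ are indistinguishable.

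\emph{Where the difficulty lies.} The scheme is classical, so the real work is bookkeeping: one must verify that the coefficients of \eqref{eq:AuxProblem} have linear growth uniformly in $t$ — which is precisely where the hypotheses $p \ge 2\vee\barp$ and $\Psi \in \cP_p\bigl(C([0,T],\real^d)\bigr)$ enter — and keep careful track of the passage between the Lipschitz exponent $\barp$ and the working exponent $p$, and between the Wasserstein distance on $\real^d$ and the interval-restricted distance $\cW_{p,t}$ on path space. The only genuinely non-trivial point is that the contraction estimate is not strict on all of $[0,T]$ at once; this is circumvented by the standard iteration that makes $\cS^n$ a contraction, after which the fixed-point theorem applies.
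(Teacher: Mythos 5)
Your proposal is correct and follows essentially the same route as the paper's Appendix~\ref{app:SzinitmanProof}: the auxiliary SDE \eqref{eq:AuxProblem} with uniform-in-time Lipschitz and linear-growth bounds, the interval-restricted Wasserstein distance on path space together with the estimate $W_p(\Psi^1_s,\Psi^2_s)\le W_{p,s}(\Psi^1,\Psi^2)$, the Burkholder--Davis--Gundy/Gr\"onwall contraction estimate iterated so that $\cS^n$ becomes a strict contraction, and the same coupling bound $W_p^p(\rho^1_s,\rho^2_s)\le\E\bigl[\norm{\barX^1_s-\barX^2_s}^p\bigr]$ for pathwise uniqueness. No gaps beyond standard bookkeeping that the paper also leaves to classical SDE theory.
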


\begin{proof} The original proof of Sznitman \cite[Theorem 1.1, p.~172]{Sznitman} covers the case where $v$ is bounded and linear in the measure argument and $\sigma \equiv I_d$. The case $\barp = 2$ is treated in \cite[Theorem 2.2]{MeleardSDE} (see also \cite[Theorem 1.7]{CarmonaSDE} and \cite[Proposition 1, Section 2.2]{ChaosReviewI}). For the reader's convenience, we present a proof in the case $\barp \ge 1$ in Appendix~\ref{app:SzinitmanProof}.
\hfill
\end{proof}

\subsection{Lack of global Lipschitz continuity: the established proof for the mean-field CBO equation.}\label{subsec:EstablishedApproach} One of the main technical difficulties in the study of \eqref{eq:mean_field_CBO} is the lack of global Lipschitz continuity of the fields. Indeed, even assuming reasonable continuity and growth conditions on the objective function $f$, the CBO fields 
\begin{equation}\label{eq:fieldsCBO}
    v_\mu(x) = -\lambda\bigl(x - \cM_\beta(\mu)\bigr), \qquad \sigma_\mu(x) = S\bigl(x - \cM_\beta(\mu)\bigr)
\end{equation}
satisfy the Lipschitz condition only in a \textit{local} sense in the measure argument. More precisely, under suitable structural assumptions on $f$, one can show that there exists $\barp = p_\cM(s,\ell) \ge 1$ (the nature of the exponent $p_\cM(s,\ell)$ shall be clarified in Section~\ref{sec:StructuralAssumptions}), such that, for all $p\ge\barp$ and $R > 0$, there exist constants $L_{v,R}\,,L_{\sigma,R}>0$ satisfying
\begin{subequations}\label{eq:LocLipFieldsCBO}
\begin{align}
    \norm{v_{\mu_1}(x_1) - v_{\mu_2}(x_2)} &\le L_{v,R}\bigl(\norm{x_1 - x_2} + W_p(\mu_1, \mu_2) \bigr), \label{eq:LocLipDriftCBO} \\
    \norm{\sigma_{\mu_1}(x_1) - \sigma_{\mu_2}(zx_2)}_F &\le L_{\sigma,R}\bigl(\norm{x_1 - x_2} + W_p(\mu_1, \mu_2) \bigr), \label{eq:LocLipDiffCBO}
\end{align}
\end{subequations}
for all $x_1, x_2 \in \real^d$ and $\mu_1, \mu_2 \in \cP_{p,R}(\real^d)$ (the set $\cP_{p,R}(\real^d)$ has been defined in \eqref{eq:P_pR}). Inequalities \eqref{eq:LocLipFieldsCBO} are an immediate corollary of Proposition~\ref{prop:PropConsensusPointHoff}, which is a result of \cite{MeanFieldHoff}.

This obstruction to the application of the classical Sznitman's argument of well-posedness was already identified and addressed in \cite{CBO-Carrillo}. There, the authors devise a proof based on a fixed-point argument on the space $C([0,T],\real^d)$. The first step involves the formulation of an auxiliary equation, where the trajectory of the instantaneous consensus point in \eqref{eq:mean_field_CBO} is replaced by a fixed continuous curve $\gamma \in C([0,T],\real^d)$, namely
\begin{equation}\label{eq:AuxProblemCarrillo}
    \de X^\gamma_t = -\lambda\bigl(X^\gamma_t - \gamma_t\bigr)\, \de t + S\bigl(X^\gamma_t- \gamma_t\bigr)\,\de \barB_t, \qquad\text{ for } t \in [0,T];
\end{equation}
then, the fixed points of the map $ \cT \colon C([0,T],\real^d) \to C([0,T],\real^d)$, defined as
\begin{equation}\label{eq:DefT}
        \gamma \mapsto \cT(\gamma) \coloneqq \{t \mapsto \cM_\beta(\Law(X^\gamma_t))\},
\end{equation}
are sought for. The fixed-point problem for $\cT$ is handled via the Leray--Schauder fixed-point theorem \cite[Theorem 11.3]{GilbargTrudinger};
indeed, it can be shown that $\cT$ is compact, and that the set of $\gamma$ satisfying $\gamma = \tau \cT(\gamma)$ for some $\tau \in [0,1]$ is bounded. This argument proves the existence of a strong solution to \eqref{eq:mean_field_CBO}. Finally, through a Gr\"{o}nwall-type argument, pathwise uniqueness for problem \eqref{eq:mean_field_CBO} is obtained in the class of strong solutions such that the map $t \mapsto \cM_\beta(\rho_t)$ is continuous over $[0,T]$.

For the details, we refer the reader to the well-posedness results in \cite{CBO-Carrillo} and to their generalization in \cite[Theorem 2.3]{MeanFieldHoff}, which is, to the best of our knowledge, the most comprehensive account of the well-posedness results for the mean-field CBO equation.

We point out that this proof strategy is now well established in the CBO literature, and variants of it appear in several related works; see, for instance, \cite{FedCBOCarrillo,CBOFornasierHypersurfaces,CBOFornasierMultMin, CBOHuangMinMax,CBOHuangMultiplayer}.

\subsection{An alternative approach for the mean-field CBO equation: a proof \textit{à la} Sznitman through a truncation argument.}\label{subsec:AlternativeCBO}
In this paragraph, we outline the well-posedness argument for equation \eqref{eq:mean_field_CBO} developed in the present work. The main idea is to recover the global Lipschitz continuity of the fields by means of a truncation procedure, thereby bringing the problem back within the scope of Sznitman’s framework described in Section~\ref{subsec:ClassicalSznitman}. 

Since, for fixed $p \ge \barp = p_\cM(s,\ell)$ (see \eqref{eq:p_M} below) and $R > 0$, the local Lipschitz continuity conditions \eqref{eq:LocLipFieldsCBO} hold on the set $\real^d \times \cP_{p,R}(\real^d)$, we may localize the problem through a truncation function $\varphi_R\colon\cP_{p,R}(\real^d) \to [0,1]$ satisfying 
\begin{equation}\label{eq:RequirementTruncation}
\varphi_R(\mu) =
\begin{dcases}
1, &\gm_{p}(\mu) \le R, \\
0, &\gm_{p}(\mu) \ge R+1,
\end{dcases}
\end{equation}
and formulate the \emph{$R$-truncated problem} as 
\begin{equation}\label{eq:R_truncated_CBO}
    \begin{dcases}
        \de\barX_t^R = -\lambda\bigl(\barX^R_t - \varphi_R(\rho^R_t)\,\cM_\beta(\rho^R_t)\bigr)\,\de t + S\bigl(\barX^R_t - \varphi_R(\rho^R_t)\,\cM_\beta(\rho^R_t)\bigr)\,\de \barB_t\,, \\
        \rho^R_t = \Law(\barX^R_t),
    \end{dcases}
\end{equation}
for $t \in [0,T]$, with the prescription that $\barX^R_0 = \barX_0$. The fields driving the McKean--Vlasov SDE \eqref{eq:R_truncated_CBO} will be shown to be globally Lipschitz on $\real^d \times \cP_p(\real^d)$; therefore, under the assumption that the initial datum $\barX_0$ satisfies $\Law(\barX_0) \in \cP_p(\real^d)$, with $p \ge 2\vee \barp$, the argument of Sznitman (see Section~\ref{subsec:ClassicalSznitman} and Appendix~\ref{app:SzinitmanProof}) yields the existence of a strong solution to the $R$-truncated problem, which also coincides with a strong solution to \eqref{eq:mean_field_CBO} as long as $\varphi_R(\rho^R_t) = 1$. Finally, we derive an estimate on the growth of $\gm_p(\rho^R_t)$ that is uniform in $R$, which allows us to conclude that, if $R$ is taken large enough, we can extend the time interval on which $\varphi_R(\rho^R_t) = 1$ to the whole $[0,T]$. In this way, we obtain a strong solution to the original problem \eqref{eq:mean_field_CBO}.

Pathwise uniqueness is proved as a consequence of an \emph{a priori} estimate on the $p$-th moment of solutions to \eqref{eq:mean_field_CBO}, together with the uniqueness result provided by Sznitman's argument. This strategy allows us to obtain pathwise uniqueness in the class of strong solutions to \eqref{eq:mean_field_CBO} such that the map $t \mapsto \cM_\beta(\rho_t)$ is only bounded over $[0,T]$. 

The form of the $R$-truncated problem \eqref{eq:R_truncated_CBO} can be heuristically justified by observing that, when the $p$-th moment grows too large, the consensus point is replaced by the origin of $\real^d$, thereby inducing a contraction effect on the distribution of the representative particle and preventing the violation of the local Lipschitz conditions \eqref{eq:LocLipFieldsCBO}, which we recall to be valid, for all fixed $R>0$, in the subset $\cP_{p,R}(\real^d)$ of probability measures $\mu$ such that the $p$-th moment satisfies $\gm_p(\mu) \le R$. 

We remark that a similar approach has been used by the author in the recent work \cite{LavoroLeccese} to study the well-posedness of a CBO-inspired mean-field model coupled with dynamics in the convex set $[0,1]$, where this additional variable models the knowledge of the environment. Here, we expand the ideas of the aforementioned work to treat the original mean-field CBO equation. 

\section{Structural assumptions and their consequences}\label{sec:StructuralAssumptions}
In this section, we state the standing assumptions on the objects defining the mean-field CBO equation \eqref{eq:mean_field_CBO} and their consequences on the fields driving the dynamics. We start by specifying the class of objective functions $f$ considered in this work.

\begin{defn}\label{def:ClassO_sl}
Let $s,\ell \ge 0$ be fixed. We say that a function $f \colon \real^d \to \real$ belongs to the class $\cO(s,\ell)$ if
\begin{subequations}\label{eq:O}
\begin{enumerate}
    \item[(1)] there exists a constant $L_f \ge 0$ such that 
    \begin{equation}\label{eq:O1}
        \abs{f(x) - f(y)} \le L_f(1+\norm{x} + \norm{y})^s \norm{x-y}, \quad \text{for all } x,y \in \real^d;
    \end{equation}
    \item[(2)] $f$ is bounded from below by $f_* \coloneqq \inf_{x \in \real^d} f(x)$ and there exist constants $c_b$, $c_a$, $C_b$, $C_a > 0$ such that 
    \begin{equation}\label{eq:O2}
        c_b \norm{x}^\ell - C_b \le f(x) - f_* \le c_a \norm{x}^\ell + C_a, \quad \text{for all } x \in \real^d.
    \end{equation}
\end{enumerate}
\end{subequations}
\end{defn}
\noindent Conditions of this form are standard in the literature on CBO-type algorithms (see, e.g., \cite{CBO-Carrillo, MeanFieldHoff}). We also remark that the class $\cO(s,\ell)$ coincides with the class $\cA(s,\ell,\ell)$ introduced in~\cite{MeanFieldHoff}. The collection $\cO(s,\ell)$ is empty unless $\ell \le s+1$, and in the case $\ell = 0$, condition \eqref{eq:O2} reduces to a boundedness assumption on $f$. The continuity and growth conditions \eqref{eq:O} on the objective function play a central role in establishing the stability and growth estimates on the fields in \eqref{eq:mean_field_CBO}. Before stating the relevant results, we introduce the critical exponent
\begin{equation}\label{eq:p_M}
    p_\cM(s,\ell) \coloneqq 
    \begin{dcases}
        s + 2, & \text{ if } \ell = 0, \\
        1, & \text{ if } \ell > 0.
    \end{dcases}
\end{equation}
We are now in a position to recall the fundamental stability and sublinearity estimates regarding the instantaneous consensus point $\cM_\beta$.

\begin{prop}[{\cite[Proposition 3.1, Proposition A.4]{MeanFieldHoff}}] \label{prop:PropConsensusPointHoff} Let $f \in \cO(s,\ell)$, with $s,\ell\ge0$, and let $p \ge p_\cM(s,\ell)$. Then, for all $R > 0$, there exists $L_{\cM,R} > 0$, depending on $R$ and $p$, such that
\begin{equation}\label{eq:LocLipConsensusPoint}
    \norm{\cM_\beta(\mu_1) - \cM_\beta(\mu_2)} \le L_{\cM,R}\, W_p(\mu_1,\mu_2),\quad \text{for all }\mu_1,\mu_2 \in \cP_{p,R}(\real^d).
\end{equation}
Moreover, there exists a constant $C_\cM > 0$, depending on $p,\ell, c_b, C_b, c_a, C_a$, such that
\begin{equation}\label{eq:SubConsensusPoint}
    \norm{\cM_\beta(\mu)} \le C_\cM \, \gm_p(\mu),\quad\text{for all } \mu \in \cP_p(\real^d).
\end{equation}
\end{prop}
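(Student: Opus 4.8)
The plan is to follow the route of \cite{MeanFieldHoff}: decompose the consensus point as $\cM_\beta(\mu) = N(\mu)/Z(\mu)$, where $N(\mu) := \int_{\real^d} x\, e^{-\beta f(x)}\,\de\mu(x)\in\real^d$ and $Z(\mu) := \int_{\real^d} e^{-\beta f(x)}\,\de\mu(x) > 0$, and control the two pieces separately. Writing $h := f - f_*\ge 0$, the prefactor $e^{-\beta f_*}$ cancels in the quotient, so one may work with the bounded weight $e^{-\beta h}\in(0,1]$, which by \eqref{eq:O2} satisfies $c_b\norm{x}^\ell - C_b\le h(x)\le c_a\norm{x}^\ell + C_a$. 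Two elementary facts drive everything. First, $Z$ is bounded below on each $\cP_{p,R}(\real^d)$: by Markov's inequality $\mu\bigl(\norm{x}\le 2^{1/p}R\bigr)\ge\tfrac12$ for $\mu\in\cP_{p,R}(\real^d)$, and on that ball $e^{-\beta h(x)}\ge e^{-\beta(c_a 2^{\ell/p}R^\ell + C_a)}$, so $Z(\mu)\ge z_R := \tfrac12 e^{-\beta(c_a 2^{\ell/p}R^\ell + C_a)}$ (with $z_R$ independent of $R$ when $\ell = 0$). Second, the integrands $x\mapsto e^{-\beta h(x)}$ and $x\mapsto x\, e^{-\beta h(x)}$ inherit regularity from \eqref{eq:O1} through the inequality $\abs{e^{-a}-e^{-b}}\le\abs{a-b}$ valid for $a,b\ge 0$.

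For the stability estimate \eqref{eq:LocLipConsensusPoint}, I would first show that $\mu\mapsto Z(\mu)$ and $\mu\mapsto N(\mu)$ are $W_p$-Lipschitz on $\cP_{p,R}(\real^d)$. When $\ell > 0$ the functions $e^{-\beta h}$ and $x\, e^{-\beta h}$ are in fact \emph{globally} Lipschitz and bounded on $\real^d$, since the super-polynomial decay $e^{-\beta c_b\norm{x}^\ell}$ dominates the polynomial growth coming from \eqref{eq:O1}; hence, by Kantorovich--Rubinstein duality, $\abs{Z(\mu_1)-Z(\mu_2)}$ and $\norm{N(\mu_1)-N(\mu_2)}$ are bounded by a constant multiple of $W_1(\mu_1,\mu_2)\le W_p(\mu_1,\mu_2)$ for every $p\ge 1 = p_\cM(s,\ell)$, with no dependence on $R$. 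When $\ell = 0$, $f$ is bounded and $e^{-\beta h}$, $x\, e^{-\beta h}$ are locally Lipschitz with local Lipschitz constant of polynomial degree $s$ and $s+1$ respectively; coupling $\mu_1,\mu_2$ optimally and applying Hölder's inequality then gives the same $W_p$-Lipschitz bounds for $p\ge s+2 = p_\cM(s,\ell)$, with a constant depending on $R$ (this is the step forcing $p\ge p_\cM(s,\ell)$). Finally I would combine these via the identity
\[
\cM_\beta(\mu_1)-\cM_\beta(\mu_2) = \frac{N(\mu_1)-N(\mu_2)}{Z(\mu_1)} - \frac{N(\mu_2)}{Z(\mu_1)Z(\mu_2)}\bigl(Z(\mu_1)-Z(\mu_2)\bigr),
\]
using $Z\ge z_R$ and the easy bound $\norm{N(\mu)}\le C_R$ on $\cP_{p,R}(\real^d)$, to obtain \eqref{eq:LocLipConsensusPoint}.

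For the sublinearity estimate \eqref{eq:SubConsensusPoint}, fix $\mu\neq\delta_0$ and set $\nu := Z(\mu)^{-1}e^{-\beta h}\mu$, a probability measure with $\cM_\beta(\mu)=\int x\,\de\nu$, so $M := \norm{\cM_\beta(\mu)}\le\int\norm{x}\,\de\nu$. If $\gm_p(\mu)\le 1$, then, since $e^{-\beta h}\le 1$, Jensen's inequality gives $M\le\bigl(\int\norm{x}^p\,\de\nu\bigr)^{1/p}\le\bigl(\gm_p(\mu)^p/z_1\bigr)^{1/p}$, the desired linear bound. If $\gm_p(\mu)>1$, I would use a barycentric argument: from $M\le\int\norm{x}\,\de\nu$ and $\int_{\norm{x}<M/2}\norm{x}\,\de\nu\le M/2$ one deduces $\int_{\norm{x}\ge M/2}\norm{x}\, e^{-\beta h}\,\de\mu\ge\tfrac{M}{2}Z(\mu)$; if $M/2$ exceeds a fixed threshold $r_0$ (depending only on $\beta,\ell,c_b,C_b$, past which $h(x)\ge\tfrac12 c_b\norm{x}^\ell$ and $t\mapsto t\, e^{-\beta c_b t^\ell/2}$ is nonincreasing), this forces $Z(\mu)\le e^{-\beta c_b(M/2)^\ell/2}$, which combined with the lower bound $Z(\mu)\ge z_{\gm_p(\mu)}$ and a logarithm yields $(M/2)^\ell\lesssim\gm_p(\mu)^\ell + 1$, hence $M\lesssim\gm_p(\mu)$ (the additive constant is absorbed because $\gm_p(\mu)>1$); if instead $M/2\le r_0$ then $M\le 2r_0\le 2r_0\,\gm_p(\mu)$ directly. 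Taking $C_\cM$ as the largest constant produced proves \eqref{eq:SubConsensusPoint}.

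The main obstacle is the sublinearity estimate — specifically, obtaining a bound on $\cM_\beta(\mu)$ that does not deteriorate when $\mu$ places substantial mass far from the origin, where the Boltzmann weight $e^{-\beta f}$ is exponentially small. Estimating the numerator $N(\mu)$ and the denominator $Z(\mu)$ in isolation destroys the cancellation of the partition function and produces bounds that blow up like $\exp\bigl(\beta(c_a-c_b)\gm_p(\mu)^\ell\bigr)$ (note that $c_b\le c_a$ is forced by \eqref{eq:O2}); the barycentric argument is the device that preserves this cancellation, by testing $\tfrac{M}{2}Z(\mu)$ against the tail integral $\int_{\norm{x}\ge M/2}\norm{x}\, e^{-\beta h}\,\de\mu$ instead of bounding $N$ and $Z$ separately.
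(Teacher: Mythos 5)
The paper does not prove this proposition at all: it is quoted verbatim from Gerber--Hoffmann--Vaes \cite{MeanFieldHoff} (Propositions 3.1 and A.4), so there is no internal proof to compare against. Your argument is essentially the standard route behind that reference and, on the whole, it is sound: the decomposition $\cM_\beta=N/Z$ after factoring out $e^{-\beta f_*}$, the Markov-inequality lower bound $Z(\mu)\ge z_R$ on $\cP_{p,R}(\real^d)$, the dichotomy $\ell>0$ (globally Lipschitz, bounded integrands, Kantorovich--Rubinstein and $W_1\le W_p$) versus $\ell=0$ (polynomially weighted local Lipschitz bounds, optimal coupling plus H\"older, which is exactly where the threshold $p\ge s+2=p_\cM(s,0)$ enters, with equality admissible), and the quotient identity all assemble correctly into \eqref{eq:LocLipConsensusPoint}. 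Two small presentational caveats: the global Lipschitz claim for $x\mapsto xe^{-\beta h(x)}$ when $\ell>0$ should be justified along segments (a.e.\ differentiability of the locally Lipschitz $h$ and the bound $\sup_z(1+\norm{z})^{s+1}e^{-\beta c_b\norm{z}^\ell}<\infty$), since the crude two-point estimate leaves a factor $(1+\norm{x}+\norm{y})^s$ unpaired with the exponential decay; and your constants inevitably carry a $\beta$-dependence, which the statement suppresses.

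There is one genuine gap, in the sublinearity estimate \eqref{eq:SubConsensusPoint} for $\ell=0$. Your case split is ``$\gm_p(\mu)\le1$: Jensen'' and ``$\gm_p(\mu)>1$: barycentric argument'', but the barycentric step is intrinsically an $\ell>0$ device: for $\ell=0$ the map $t\mapsto te^{-\beta c_b t^\ell/2}=te^{-\beta c_b/2}$ is increasing, so no threshold $r_0$ with the stated properties exists, the comparison $Z(\mu)\le e^{-\beta c_b(M/2)^\ell/2}$ degenerates to a constant, and the final step $(M/2)^\ell\lesssim\gm_p(\mu)^\ell+1\Rightarrow M\lesssim\gm_p(\mu)$ requires taking an $\ell$-th root, unavailable at $\ell=0$. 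So as written the case $\ell=0$, $\gm_p(\mu)>1$ is not covered. The fix is immediate with tools you already have: for $\ell=0$ condition \eqref{eq:O2} gives $h\le c_a+C_a$ everywhere, hence $Z(\mu)\ge e^{-\beta(c_a+C_a)}$ uniformly in $\mu$, and the Jensen bound $\norm{\cM_\beta(\mu)}\le\bigl(Z(\mu)^{-1}\gm_p^p(\mu)\bigr)^{1/p}\le e^{\beta(c_a+C_a)/p}\,\gm_p(\mu)$ holds for every $\mu\in\cP_p(\real^d)$ with no restriction on $\gm_p(\mu)$; you should state this case separately rather than route it through the barycentric argument. With that amendment the proof is complete, and the barycentric device is a nice way to preserve the numerator--denominator cancellation in the genuinely coercive case $\ell>0$.
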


\begin{rem}\label{rem:S_Lip} We recall that, for the sake of generality, we assume the map $S \colon \real^d \to \real^{d\times d}$ in \eqref{eq:mean_field_CBO} to be globally Lipschitz continuous, and to satisfy $S(0) = 0_{d\times d}$, where $0_{d\times d}$ is the null matrix in $\real^{d\times d}$. We henceforth denote by $L_S$ its Lipschitz constant. Notice that the these two conditions imply the estimate
\begin{equation*}
    \norm{S(x)}_F \le L_S \norm{x},\quad \text{ for all } x\in\real^d.
\end{equation*}
\end{rem}

Estimates \eqref{eq:LocLipConsensusPoint} and \eqref{eq:SubConsensusPoint} are the key tool to implement our alternative technique of well-posedness of the mean-field CBO equation.

\section{Proof of existence through a truncation argument}\label{sec:Existence}
In this section, we present in detail the truncation argument outlined in Section~\ref{subsec:AlternativeCBO}, which will provide the existence of a strong solution to equation \eqref{eq:mean_field_CBO}.

We begin by constructing a \emph{cut-off} function defined on the space $\cP_p(\real^d)$ and satisfying condition~\eqref{eq:RequirementTruncation}. For all fixed $R>0$, we consider a cut-off function $\eta_R$ belonging to $C^\infty_c([0,\infty))$ (the space of infinitely differentiable functions $[0,\infty) \to \real$ with compact support), such that $0 \le \eta_R \le 1$ and
\begin{equation}
    \eta_R(z) =
    \begin{dcases}
        1, & z \le R, \\
        0, & z \ge R+1. \\
    \end{dcases}
\end{equation}
Whence, we define the truncation function $\varphi_R \colon \cP_p(\real^d) \to [0,1]$ as 
\begin{equation}\label{eq:defTruncFunc}
    \varphi_R(\mu) \coloneqq \eta_R\bigl(\gm_p(\mu)\bigr),\qquad \text{for all } \mu \in \cP_p(\real^d).
\end{equation}
The function $\varphi_R$ clearly obeys \eqref{eq:RequirementTruncation} and satisfies the following global Lipschitz property.

\begin{lemma}\label{lem:varphiLip} For all fixed $R>0$ and $p \ge 1$, the truncation function $\varphi_R \colon \cP_p(\real^d) \to [0,1]$ defined in \eqref{eq:defTruncFunc} is Lipschitz continuous on the Wasserstein space $(\cP_p(\real^d), W_p)$. More explicitly, there exists a constant $L_{\varphi,R} > 0$ such that for all $\mu_1, \mu_2 \in \cP_p(\real^d)$
\begin{equation}\label{eq:varphiLip}
    \abs{\varphi_R(\mu_1) - \varphi_R(\mu_2)} \le L_{\varphi,R}\,W_p(\mu_1,\mu_2).
\end{equation}
\end{lemma}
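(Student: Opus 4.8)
The plan is to reduce the claimed Lipschitz bound on $\varphi_R$ to the Lipschitz bound on the cut-off function $\eta_R$ and the $1$-Lipschitz (indeed, essentially non-expansive) behaviour of the map $\mu \mapsto \gm_p(\mu)$ with respect to $W_p$. Since $\varphi_R = \eta_R \circ \gm_p$ by \eqref{eq:defTruncFunc}, for all $\mu_1, \mu_2 \in \cP_p(\real^d)$ we have
\begin{equation*}
    \abs{\varphi_R(\mu_1) - \varphi_R(\mu_2)} = \abs{\eta_R(\gm_p(\mu_1)) - \eta_R(\gm_p(\mu_2))} \le \norm{\eta_R'}_{L^\infty(\real)}\,\abs{\gm_p(\mu_1) - \gm_p(\mu_2)},
\end{equation*}
using that $\eta_R \in C^\infty_c(\real)$, hence globally Lipschitz with constant $\norm{\eta_R'}_{L^\infty(\real)} < \infty$. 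So it remains to bound $\abs{\gm_p(\mu_1) - \gm_p(\mu_2)}$ by $W_p(\mu_1,\mu_2)$.

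For the latter, I would argue as follows. Recall that, writing $\delta_0$ for the Dirac mass at the origin, $\gm_p(\mu) = W_p(\mu, \delta_0)$: indeed $\Gamma(\mu,\delta_0)$ contains only the product coupling $\mu \otimes \delta_0$, so $W_p(\mu,\delta_0)^p = \int_{\real^d} \norm{x}^p\,\de\mu(x) = \gm_p(\mu)^p$. Hence, by the triangle inequality for $W_p$ on the metric space $(\cP_p(\real^d), W_p)$,
\begin{equation*}
    \gm_p(\mu_1) = W_p(\mu_1,\delta_0) \le W_p(\mu_1,\mu_2) + W_p(\mu_2,\delta_0) = W_p(\mu_1,\mu_2) + \gm_p(\mu_2),
\end{equation*}
and symmetrically, so that $\abs{\gm_p(\mu_1) - \gm_p(\mu_2)} \le W_p(\mu_1,\mu_2)$. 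Combining the two displays gives \eqref{eq:varphiLip} with the explicit constant $L_{\varphi,R} \coloneqq \norm{\eta_R'}_{L^\infty(\real)}$. (Alternatively, one may avoid invoking the triangle inequality for $W_p$ directly and instead take a near-optimal coupling $\pi \in \Gamma(\mu_1,\mu_2)$, write $\gm_p(\mu_i)$ as the $L^p(\pi)$-norm of $(x_1,x_2) \mapsto \norm{x_i}$, and apply Minkowski's inequality in $L^p(\pi)$ together with $\big\lvert \norm{x_1} - \norm{x_2}\big\rvert \le \norm{x_1 - x_2}$; this is the self-contained version of the same estimate.)

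I do not anticipate a genuine obstacle here: the only point requiring a little care is to make sure the reverse-triangle manipulation is carried out at the level of $W_p$ (equivalently, $L^p(\pi)$) rather than naively inside the integral, since $z \mapsto z^{1/p}$ is only Hölder and not Lipschitz near the origin for $p > 1$ — the Minkowski/triangle-inequality route sidesteps this cleanly. One should also note in passing that $\gm_p(\mu) < \infty$ for $\mu \in \cP_p(\real^d)$ by definition, so all quantities above are finite and the composition $\eta_R \circ \gm_p$ is well defined on all of $\cP_p(\real^d)$.
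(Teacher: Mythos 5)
Your proposal is correct and follows essentially the same argument as the paper: factor $\varphi_R = \eta_R \circ \gm_p$, use the Lipschitz continuity of $\eta_R$, identify $\gm_p(\mu) = W_p(\mu,\delta_0)$ via the unique coupling $\mu \otimes \delta_0$, and conclude that $\gm_p$ is $1$-Lipschitz by the triangle inequality for $W_p$. The additional remarks (the Minkowski-inequality alternative and the caution about not applying the reverse triangle inequality inside the integral) are sound but not needed beyond the paper's route.
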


\begin{proof} Since $\eta_R \in C^\infty_c([0,\infty))$ is Lipschitz continuous by construction, it is enough to prove that the function $\gm_p \colon \cP_p(\real) \to [0,\infty)$ is Lipschitz continuous on $(\cP_p(\real^d), W_p)$. To this end, we observe that for every $\mu \in \cP_p(\real^d)$ we have
\begin{equation}\label{eq:m_equals_W}
    \gm_p(\mu) = W_p(\mu, \delta_0),
\end{equation}
where $\delta_0$ denotes the Dirac measure centered at the origin. Equation \eqref{eq:m_equals_W} is a consequence of the fact that the only admissible transport plan between $\mu$ and $\delta_0$ is $\mu \otimes \delta_0$, namely, $\Gamma(\mu,\delta_0) = \{\mu \otimes \delta_0\}$, in the notations of Section~\ref{subsec:Notation}. To prove that this is the case, it is enough to show that, if $\pi \in \Gamma(\mu,\delta_0)$, then $\pi = \mu \otimes \delta_0$; it is well-known that a sufficient condition for the last equality to hold is
\begin{equation}\label{eq:coincidence}
    \pi(A\times B) = \mu(A)\delta_0(B) = 
    \begin{dcases}
    0, & 0 \notin B, \\
    \mu(A), & 0 \in B, \\
    \end{dcases}
\end{equation}
for all Borel subsets $A,B \subseteq \real^d$. To prove \eqref{eq:coincidence}, we distinguish between two cases: if $0\notin B$, by monotonicity of $\pi$ and the transport constrain $\pi(\real^d\times B) = \delta_0(B)$, we have that \begin{equation*}
    0 \le \pi(A \times B) \le \pi(\real^d \times B) = \delta_0(B) = 0; 
\end{equation*}    
whereas, if $0\in B$, by additivity of $\pi$, the constrain $\pi(A \times \real^d) = \mu(A)$, and the relation $\pi(A\times (B\setminus \{0\})) = 0 = \pi(A\times (\real^d \setminus \{0\}))$ (descending from the previous case), we have
\begin{equation*}
    \pi(A \times B) = \pi(A \times \{0\}) + \pi(A\times (B\setminus \{0\})) = \pi(A \times \real^d) = \mu(A).
\end{equation*}
This proves \eqref{eq:coincidence}, whence $\Gamma(\mu,\delta_0) = \{\mu \otimes \delta_0\}$ follows.

Therefore, by combining \eqref{eq:m_equals_W} with the triangle inequality for $W_p$, we obtain that, for all $\mu_1,\mu_2 \in \cP_p(\real^d)$,
    \begin{equation*}
        \abs{\gm_p(\mu_1) - \gm_p(\mu_2)}
        = \abs{W_p(\mu_1,\delta_0) - W_p(\mu_2,\delta_0)}
        \le W_p(\mu_1,\mu_2),
    \end{equation*}
which proves that $\gm_p$ is $1$-Lipschitz on $(\cP_p(\real^d), W_p)$. The claim follows.
\hfill    
\end{proof}

We are now in a position to investigate the well-posedness of the $R$-truncated problem \eqref{eq:R_truncated_CBO}.
We observe that,  for any $R > 0$, the McKean--Vlasov SDE \eqref{eq:R_truncated_CBO} is driven by the $R$-truncated drift and diffusion fields $\tv^R \colon \real^d \times \cP_p(\real^d) \to \real^d$, $\tsigma^R \colon \real^d \times \cP_p(\real^d) \to \real^{d \times d}$
\begin{equation}\label{eq:R_truncated_fields}
    \tv^R_\mu(x) \coloneqq -\lambda \bigl(x - \varphi_R(\mu)\,\cM_\beta(\mu)\bigr), \qquad \tsigma^R_\mu(x) \coloneqq S \bigl(x - \varphi_R(\mu)\,\cM_\beta(\mu)\bigr).
\end{equation}
In the following proposition, we shall prove that the $R$-truncated fields defined in \eqref{eq:R_truncated_fields} are globally Lipschitz continuous.

\begin{prop}\label{prop:LipTruncFields} Let $f \in \cO(s,\ell)$, with $s,\ell\ge0$, and let $p \ge p_\cM(s,\ell)$. Then, for all fixed $R > 0$, there exist $\tL_{v,R}, \tL_{\sigma,R} > 0$ such that
\begin{subequations}\label{eq:LipTruncFieldsCBO}
\begin{align}
    \norm{\tv^R_{\mu_1}(x_1) - \tv^R_{\mu_2}(x_2)} &\le \tL_{v,R}\bigl(\norm{x_1 - x_2} + W_p(\mu_1, \mu_2) \bigr), \label{eq:LipTruncDriftCBO} \\
    \norm{\tsigma^R_{\mu_1}(x_1) - \tsigma^R_{\mu_2}(x_2)}_F &\le \tL_{\sigma,R}\bigl(\norm{x_1 - x_2} + W_p(\mu_1, \mu_2) \bigr), \label{eq:LipTruncDiffCBO}
\end{align}
\end{subequations}
for all $x_1, x_2 \in \real^d$ and $\mu_1, \mu_2 \in \cP_p(\real^d)$.
\end{prop}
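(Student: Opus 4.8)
The plan is to reduce the two estimates in \eqref{eq:LipTruncFieldsCBO} to a single global Lipschitz bound for the ``truncated consensus map''
\begin{equation*}
    g_R \colon \cP_p(\real^d) \to \real^d, \qquad g_R(\mu) \coloneqq \varphi_R(\mu)\,\cM_\beta(\mu).
\end{equation*}
Since $p \ge p_\cM(s,\ell)$, the sublinearity estimate \eqref{eq:SubConsensusPoint} ensures that $\cM_\beta(\mu)$ is finite for every $\mu \in \cP_p(\real^d)$, so $g_R$ is well-defined on the whole space. As $\tv^R_\mu(x) = -\lambda\bigl(x - g_R(\mu)\bigr)$ and $\tsigma^R_\mu(x) = S\bigl(x - g_R(\mu)\bigr)$, with $S$ being $L_S$-Lipschitz (Remark~\ref{rem:S_Lip}), the triangle inequality gives at once
\begin{equation*}
    \norm{\tv^R_{\mu_1}(x_1) - \tv^R_{\mu_2}(x_2)} \le \lambda\bigl(\norm{x_1 - x_2} + \norm{g_R(\mu_1) - g_R(\mu_2)}\bigr),
\end{equation*}
and the same inequality for $\tsigma^R$ with $\lambda$ replaced by $L_S$. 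Hence it suffices to produce a constant $C_R > 0$ with $\norm{g_R(\mu_1) - g_R(\mu_2)} \le C_R\,W_p(\mu_1,\mu_2)$ for all $\mu_1, \mu_2 \in \cP_p(\real^d)$; one then takes $\tL_{v,R} \coloneqq \lambda(1 \vee C_R)$ and $\tL_{\sigma,R} \coloneqq L_S(1 \vee C_R)$.

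To bound $g_R$ I would argue by cases, exploiting that $\varphi_R$ vanishes on $\{\mu : \gm_p(\mu) \ge R+1\}$, so $g_R$ is supported on $\cP_{p,R+1}(\real^d)$. Fix $\mu_1, \mu_2$ and assume (by symmetry) that $\gm_p(\mu_1) \le \gm_p(\mu_2)$. If $\gm_p(\mu_2) \le R+1$, both measures lie in $\cP_{p,R+1}(\real^d)$, and the product-rule splitting
\begin{equation*}
    g_R(\mu_1) - g_R(\mu_2) = \varphi_R(\mu_1)\bigl(\cM_\beta(\mu_1) - \cM_\beta(\mu_2)\bigr) + \bigl(\varphi_R(\mu_1) - \varphi_R(\mu_2)\bigr)\cM_\beta(\mu_2),
\end{equation*}
combined with $0 \le \varphi_R \le 1$, the local Lipschitz estimate \eqref{eq:LocLipConsensusPoint} on $\cP_{p,R+1}(\real^d)$, Lemma~\ref{lem:varphiLip}, and \eqref{eq:SubConsensusPoint} applied to $\mu_2$ (so that $\norm{\cM_\beta(\mu_2)} \le C_\cM(R+1)$), yields a bound of the form $\bigl(L_{\cM,R+1} + L_{\varphi,R}\,C_\cM(R+1)\bigr)\,W_p(\mu_1,\mu_2)$. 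If instead $\gm_p(\mu_2) > R+1$, then $g_R(\mu_2) = 0$; if moreover $\gm_p(\mu_1) \ge R+1$ there is nothing to prove, and otherwise $\gm_p(\mu_1) < R+1$ and I would use that $\eta_R(R+1) = 0$, writing (with $L_{\eta,R}$ the Lipschitz constant of $\eta_R$, and using $\gm_p(\mu) = W_p(\mu,\delta_0)$ as in the proof of Lemma~\ref{lem:varphiLip})
\begin{equation*}
    \varphi_R(\mu_1) = \bigl|\eta_R(\gm_p(\mu_1)) - \eta_R(R+1)\bigr| \le L_{\eta,R}\bigl((R+1) - \gm_p(\mu_1)\bigr) \le L_{\eta,R}\bigl(\gm_p(\mu_2) - \gm_p(\mu_1)\bigr) \le L_{\eta,R}\,W_p(\mu_1,\mu_2);
\end{equation*}
combined with $\norm{\cM_\beta(\mu_1)} \le C_\cM\,\gm_p(\mu_1) \le C_\cM(R+1)$, this gives $\norm{g_R(\mu_1) - g_R(\mu_2)} = \varphi_R(\mu_1)\norm{\cM_\beta(\mu_1)} \le L_{\eta,R}\,C_\cM(R+1)\,W_p(\mu_1,\mu_2)$. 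Taking $C_R$ to be the larger of the two constants thus obtained concludes the argument.

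The only genuinely delicate point is this last ``mixed'' regime, where one measure has $p$-th moment below $R+1$ and the other above: there $\cM_\beta$ need not be controlled by a single Lipschitz constant on a common ball of $\cP_p(\real^d)$, so the product-rule splitting is unavailable. What rescues the estimate is that $\varphi_R$ vanishes identically at the threshold $\gm_p = R+1$, so its value is comparable to the moment gap $(R+1) - \gm_p(\mu_1)$, hence to $W_p(\mu_1,\mu_2)$, while \eqref{eq:SubConsensusPoint} keeps $\norm{\cM_\beta}$ bounded by $C_\cM(R+1)$ on the whole support of $\varphi_R$. Everything else — the reduction to $g_R$, and the product-rule estimate in the ``small moment'' regime — is routine, using only Lemma~\ref{lem:varphiLip}, Proposition~\ref{prop:PropConsensusPointHoff}, and the Lipschitz continuity of $S$.
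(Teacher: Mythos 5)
Your proof is correct and follows essentially the same route as the paper: the same product-rule splitting of $\varphi_R(\mu_1)\cM_\beta(\mu_1)-\varphi_R(\mu_2)\cM_\beta(\mu_2)$, the same three-case analysis according to whether the $p$-th moments exceed $R+1$, and the same ingredients (Lemma~\ref{lem:varphiLip}, Proposition~\ref{prop:PropConsensusPointHoff}, and the Lipschitz continuity of $S$). Your hands-on bound $\varphi_R(\mu_1)\le L_{\eta,R}\,W_p(\mu_1,\mu_2)$ in the mixed regime is just Lemma~\ref{lem:varphiLip} applied with $\varphi_R(\mu_2)=0$, which is exactly how the paper disposes of that case.
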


\begin{proof} Let $R > 0$ be fixed. It is sufficient to prove that the $R$-truncated fields are Lipschitz with respect to each variable separately. Let us begin with the field $v$. We have that, for all fixed $\mu \in \cP_p(\real^d)$,
\begin{equation*}
    \norm{\tv^R_{\mu}(x_1) - \tv^R_{\mu}(x_2)} = \lambda \norm{x_1 - x_2},
\end{equation*}
for all $x_1, x_2 \in \real^d$. On the other hand, if we fix $x \in \real^d$, then we can write
\begin{equation}\label{eq:intermediateTruncLip}
\begin{split}
    &\norm{\tv^R_{\mu_1}(x)-\tv^R_{\mu_2}(x)} = \lambda \norm{\varphi_R(\mu_1)\,\cM_\beta(\mu_1) - \varphi_R(\mu_2)\,\cM_\beta(\mu_2)} \\
    \le&\,\lambda\bigl( \abs{\varphi_R(\mu_1)}\, \norm{\cM_\beta(\mu_1) - \cM_\beta(\mu_2)} + \norm{\cM_\beta(\mu_2)}\, \abs{\varphi_R(\mu_1) - \varphi_R(\mu_2)} \bigr)
\end{split}
\end{equation}
for all $\mu_1, \mu_2 \in \cP_p(\real^d)$. 
Now, we observe that the hypotheses of Proposition~\ref{prop:PropConsensusPointHoff} are satisfied and that three cases can occur. If both measures $\mu_1, \mu_2$ satisfy $\gm_p(\mu_1), \gm_p(\mu_2) \le R+1$, then by recalling Lemma \ref{lem:varphiLip}, the local Lipschitz property \eqref{eq:LocLipConsensusPoint}, the sublinearity property \eqref{eq:SubConsensusPoint}, and the fact that $0\le\varphi_R\le1$, estimate \eqref{eq:intermediateTruncLip} becomes
\begin{equation*}
\begin{split}
    \norm{\tv^R_{\mu_1}(x)-\tv^R_{\mu_2}(x)} \le&\, \lambda\bigl( L_{\cM,R+1}\,W_p(\mu_1, \mu_2) + C_\cM\,\gm_p(\mu_2) \,L_{\varphi,R}\,W_p(\mu_1,\mu_2)\bigr) \\
    \le&\, \lambda\bigl(L_{\cM,R+1} + C_\cM (R+1) L_{\varphi,R}\bigr)\,W_p(\mu_1,\mu_2).
\end{split}
\end{equation*}
If instead $\gm_p(\mu_1) > R+1$ and $\gm_p(\mu_2) \le R+1$ (the case $\gm_p(\mu_1) \le R+1$ and $\gm_p(\mu_2) > R+1$ being completely symmetric), we have that $\varphi_R(\mu_1) = 0$, and again by the sublinearity property \eqref{eq:SubConsensusPoint} and estimate \eqref{eq:intermediateTruncLip} we obtain
\begin{equation*}
    \norm{\tv^R_{\mu_1}(x)-\tv^R_{\mu_2}(x)} \le \lambda\,C_\cM (R+1) L_{\varphi,R} W_p(\mu_1,\mu_2).
\end{equation*}
Finally, when $\gm_p(\mu_1), \gm_p(\mu_2) > R+1$, the inequality trivializes since $\varphi_R(\mu_1) = \varphi_R(\mu_2) = 0$.
Therefore, estimate \eqref{eq:LipTruncDriftCBO} holds with $\tL_{v,R} \coloneqq \max\bigl\{\lambda\bigl(L_{\cM,R+1} + C_\cM (R+1) L_{\varphi,R}\bigr),\lambda\bigr\}$. Recalling the assumptions on $S$ of Remark~\ref{rem:S_Lip}, we observe that
\begin{equation*}
    \norm{\tsigma^R_{\mu_1}(x_1) - \tsigma^R_{\mu_2}(x_2)}_F \le L_S \lambda^{-1} \norm{\tv^R_{\mu_1}(x_1) - \tv^R_{\mu_2}(x_2)}.
\end{equation*}
Hence, we may take $\tL_{\sigma,R} \coloneqq L_S\lambda^{-1} \tL_{v,R}$ in  \eqref{eq:LipTruncDiffCBO}. The proof is concluded.
\hfill    
\end{proof}

By virtue of Proposition \ref{prop:LipTruncFields}, we can now apply Theorem~\ref{thm:SznitmanArgument} to obtain the well-posedness of the $R$-truncated problem \eqref{eq:R_truncated_CBO} for all $R>0$.

\begin{prop}\label{prop:WellPosednessRTrunc} 
Let $f \in \cO(s,\ell)$, with $s,\ell\ge0$, and $R > 0$ be fixed. Given a $d$-dimensional standard Brownian motion $\barB$ defined on a filtered probability space $(\Omega, \sF, (\sF_t)_{t\in[0,T]},\prob)$, and an $\sF_0$-measurable random variable $\barX_0\colon\Omega\to\real^d$ belonging to $L^p(\Omega,\sF, \prob)$, with $p \ge 2 \vee p_\cM(s,\ell)$, there exists a strong solution $\barX^R \colon \Omega \to C\bigl([0,T],\real^d\bigr)$ to problem \eqref{eq:R_truncated_CBO} with initial state $\barX_0$.
Moreover, the solution $\barX^R$ satisfies
\begin{equation}\label{eq:TruncSolFiniteMoments}
    \E\biggl[\sup_{t \in [0,T]}\norm{\barX^R_t}^p\biggr] < \infty,
\end{equation}
and it is pathwise unique in the class of strong solutions such that \eqref{eq:TruncSolFiniteMoments} holds.
\end{prop}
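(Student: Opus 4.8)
The plan is to recognise that the $R$-truncated problem \eqref{eq:R_truncated_CBO} is a McKean--Vlasov SDE of exactly the abstract form \eqref{eq:McKean-Vlasov}, whose fields have just been shown to be globally Lipschitz, and then to invoke Theorem~\ref{thm:SznitmanArgument} directly. Concretely, \eqref{eq:R_truncated_CBO} is \eqref{eq:McKean-Vlasov} with $v = \tv^R$ and $\sigma = \tsigma^R$ as defined in \eqref{eq:R_truncated_fields}, and the same initial datum $\barX_0$.

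First I would set $\barp \coloneqq p_\cM(s,\ell)$, noting $\barp \ge 1$ by \eqref{eq:p_M}. Applying Proposition~\ref{prop:LipTruncFields} with the exponent there taken to be $p = \barp$, I obtain that $\tv^R$ and $\tsigma^R$ satisfy \eqref{eq:LipTruncFieldsCBO} with $W_{\barp}$, for all $x_1,x_2 \in \real^d$ and $\mu_1,\mu_2 \in \cP_{\barp}(\real^d)$; these are precisely the global Lipschitz conditions \eqref{eq:GlobalLip} for the truncated fields, with constants $L_v = \tL_{v,R}$ and $L_\sigma = \tL_{\sigma,R}$. Second, since by hypothesis $p \ge 2 \vee p_\cM(s,\ell) = 2 \vee \barp$ and $\barX_0 \in L^p(\Omega,\sF,\prob)$, Theorem~\ref{thm:SznitmanArgument} applies verbatim to \eqref{eq:R_truncated_CBO}: it produces a strong solution $\barX^R \colon \Omega \to C([0,T],\real^d)$ with initial state $\barX_0$, which satisfies the $p$-th moment bound \eqref{eq:TruncSolFiniteMoments} and is pathwise unique in the class of strong solutions for which \eqref{eq:TruncSolFiniteMoments} holds. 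This is the full content of the proposition.

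There is essentially no obstacle to overcome: the statement is a direct corollary of Proposition~\ref{prop:LipTruncFields} and Theorem~\ref{thm:SznitmanArgument}, the real work having already been done in establishing the former. The only point deserving a moment's care is the bookkeeping of exponents --- one must read the Lipschitz estimate of Proposition~\ref{prop:LipTruncFields} in the Wasserstein order $\barp = p_\cM(s,\ell)$ (rather than in the larger order $p$), so that it matches the hypothesis \eqref{eq:GlobalLip} of Theorem~\ref{thm:SznitmanArgument}, and then observe that the requirement $p \ge 2 \vee \barp$ of that theorem coincides with the assumption $p \ge 2 \vee p_\cM(s,\ell)$ made here. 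Since $\cP_p(\real^d) \subseteq \cP_{\barp}(\real^d)$ for $p \ge \barp$, no loss is incurred.
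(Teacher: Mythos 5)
Your overall strategy is exactly the paper's: the proposition is a one-line corollary of Proposition~\ref{prop:LipTruncFields} together with Theorem~\ref{thm:SznitmanArgument}. However, the one place where you claim to be careful --- the bookkeeping of exponents --- is precisely where you go wrong. The truncated fields $\tv^R,\tsigma^R$ appearing in problem \eqref{eq:R_truncated_CBO} are built from the cut-off $\varphi_R=\eta_R\circ\gm_p$ with the \emph{same} exponent $p$ that appears in the integrability of $\barX_0$ and in the moment bound \eqref{eq:TruncSolFiniteMoments} (this is essential later, when one chooses $\barR$ so that $\gm_p(\rho_t)\le\barR$ forces $\varphi_{\barR}(\rho_t)=1$). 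Applying Proposition~\ref{prop:LipTruncFields} ``with the exponent taken to be $\barp=p_\cM(s,\ell)$'' does not give what you assert: that statement concerns the fields truncated via $\gm_{\barp}$, i.e.\ a \emph{different} cut-off and hence different fields, and its proof rests on Lemma~\ref{lem:varphiLip}, which makes $\eta_R\circ\gm_{\barp}$ Lipschitz for the metric $W_{\barp}$ only. The map $\mu\mapsto\gm_p(\mu)$ is not even $W_{\barp}$-continuous when $p>\barp$ (take $\mu_n=(1-\tfrac1n)\delta_0+\tfrac1n\delta_{x_n}$ with $\norm{x_n}=n^{3/4}$, $\barp=1$, $p=2$: then $W_1(\mu_n,\delta_0)\to0$ while $\gm_2(\mu_n)\to\infty$), so the fields of \eqref{eq:R_truncated_CBO} genuinely fail the $W_{\barp}$-Lipschitz estimate you claim; the inclusion $\cP_p\subseteq\cP_{\barp}$ does not repair this.

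The fix is immediate and is what the paper does implicitly: there is no need to match Theorem~\ref{thm:SznitmanArgument} with the exponent $p_\cM(s,\ell)$. Apply Proposition~\ref{prop:LipTruncFields} with the exponent $p$ itself (legitimate, since $p\ge p_\cM(s,\ell)$), obtaining \eqref{eq:LipTruncFieldsCBO} with the metric $W_p$ on all of $\cP_p(\real^d)$, and then invoke Theorem~\ref{thm:SznitmanArgument} with its parameter $\barp$ chosen equal to $p$; its hypothesis $p\ge2\vee\barp$ then reduces to $p\ge2$, which holds by assumption. With this correction your argument coincides with the paper's proof.
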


\begin{proof} It is a direct consequence of the global Lipschitz estimates \eqref{eq:LipTruncFieldsCBO} and of Sznitman's argument of well-posedness, as stated in Theorem~\ref{thm:SznitmanArgument}. \hfill
\end{proof}

We now consider the pathwise unique strong solution $\barX^R$ to \eqref{eq:R_truncated_CBO} constructed in Proposition~\ref{prop:WellPosednessRTrunc}. We observe that, as a consequence of the Dominated Convergence Theorem, condition \eqref{eq:TruncSolFiniteMoments} implies the continuity of the map $t \mapsto \gm_p(\rho^R_t)$ on $[0,T]$. More importantly, the strong solution $\barX^R$ is also a strong solution to the original equation \eqref{eq:mean_field_CBO} on the time interval $[0,T_R]$, where
\begin{equation}\label{eq:Time_T_R}
    T_R \coloneqq \inf\{t \in [0,T] \,\colon \gm_p(\rho^R_t) > R \},
\end{equation}
with the prescription that $T_R = T$ when this set is empty; indeed, $\varphi_R(\rho^R_t) \equiv 1$ on $[0,T_R]$. Furthermore, by pathwise uniqueness, if $R_1 < R_2$, then, $\prob$-almost surely, $\barX^{R_1} = \barX^{R_2}$ on $[0,T_{R_1}]$, whence $T_{R_1} \le T_{R_2}$. The following estimate, which is uniform in $R$, on the growth of $\gm_p(\rho^R_t)$ allows us to extend the time interval $[0,T_R]$ to the whole $[0,T]$, provided that $R$ is chosen large enough.

\begin{prop} Let us assume that the hypotheses of Proposition~\ref{prop:WellPosednessRTrunc} are satisfied. Moreover, let $\barX^R$ denote, for each $R > 0$, the solution to problem~\eqref{eq:R_truncated_CBO} constructed therein, and let $\rho_0$ be the law of the initial state $\barX_0$. Then there exists a constant $\cC_0 > 0$, depending only on $\lambda$, $\cC_\cM$, $L_S$, $p$, $d$, and $T$, such that
\begin{equation}\label{eq:UniformEstimateOnMoments}
    \sup_{t \in [0,T]} \gm_p(\rho^R_t) \le \cC_0 \gm_p(\rho_0), 
    \qquad \text{for all } R > 0.
\end{equation}
\end{prop}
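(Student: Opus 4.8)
The plan is to obtain the estimate \eqref{eq:UniformEstimateOnMoments} by a standard It\^o-plus-Gr\"onwall argument applied to the $R$-truncated dynamics, exploiting the fact that the truncation factor $\varphi_R(\rho^R_t)$ never exceeds $1$, so that the sublinearity estimate \eqref{eq:SubConsensusPoint} can be used with the (favorable) inequality $\norm{\varphi_R(\rho^R_t)\,\cM_\beta(\rho^R_t)} \le \norm{\cM_\beta(\rho^R_t)} \le C_\cM\,\gm_p(\rho^R_t)$, uniformly in $R$. Concretely, fix $R > 0$ and write $Y^R_t \coloneqq \varphi_R(\rho^R_t)\,\cM_\beta(\rho^R_t)$, which satisfies $\norm{Y^R_t} \le C_\cM\,\gm_p(\rho^R_t)$ for all $t$. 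First I would apply It\^o's formula to $\norm{\barX^R_t}^p$ (valid since $p \ge 2$; for general $p \ge 2$ one uses the $C^2$ function $x \mapsto \norm{x}^p$ and its derivatives $\nabla\norm{x}^p = p\norm{x}^{p-2}x$, $H_{\norm{\cdot}^p}$ bounded in norm by $C_{p,d}\norm{x}^{p-2}$), obtaining after taking expectations
\begin{equation*}
\E\bigl[\norm{\barX^R_t}^p\bigr] = \E\bigl[\norm{\barX_0}^p\bigr] + \E\!\int_0^t \Bigl( p\norm{\barX^R_r}^{p-2}\langle \barX^R_r, -\lambda(\barX^R_r - Y^R_r)\rangle + \tfrac12\,\trace\bigl(H_{\norm{\cdot}^p}(\barX^R_r)\,\tsigma^R_{\rho^R_r}(\barX^R_r)(\tsigma^R_{\rho^R_r}(\barX^R_r))^\top\bigr)\Bigr)\de r,
\end{equation*}
the stochastic integral having zero expectation by the finite-moment bound \eqref{eq:TruncSolFiniteMoments}. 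Using $S(0)=0_{d\times d}$ and the Lipschitz constant $L_S$ of $S$ from Remark~\ref{rem:S_Lip}, we have $\norm{\tsigma^R_{\rho^R_r}(\barX^R_r)}_F = \norm{S(\barX^R_r - Y^R_r)}_F \le L_S\norm{\barX^R_r - Y^R_r} \le L_S(\norm{\barX^R_r} + C_\cM\gm_p(\rho^R_r))$, and similarly $\norm{\tv^R_{\rho^R_r}(\barX^R_r)} \le \lambda(\norm{\barX^R_r} + C_\cM\gm_p(\rho^R_r))$.

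The second step is to bound each integrand pointwise by $C_1\bigl(\norm{\barX^R_r}^p + \gm_p(\rho^R_r)^p\bigr)$ for a constant $C_1$ depending only on $\lambda$, $C_\cM$, $L_S$, $p$, $d$: this is a routine application of Cauchy--Schwarz (for the drift term, $\norm{\barX^R_r}^{p-2}\langle\barX^R_r, Y^R_r\rangle \le \norm{\barX^R_r}^{p-1}\norm{Y^R_r}$) together with Young's inequality $a^{p-1}b \le \frac{p-1}{p}a^p + \frac1p b^p$ and $a^{p-2}b^2 \le \frac{p-2}{p}a^p + \frac2p b^p$. Taking expectations and recalling that, by Jensen/Fubini, $\gm_p(\rho^R_r)^p = \E[\norm{\barX^R_r}^p]$, we arrive at the closed inequality
\begin{equation*}
g_R(t) \le g_R(0) + 2C_1 \int_0^t g_R(r)\,\de r, \qquad g_R(t) \coloneqq \E\bigl[\norm{\barX^R_t}^p\bigr] = \gm_p(\rho^R_t)^p.
\end{equation*}
Gr\"onwall's lemma then gives $\gm_p(\rho^R_t)^p \le \gm_p(\rho_0)^p\, e^{2C_1 T}$ for all $t \in [0,T]$, i.e. \eqref{eq:UniformEstimateOnMoments} with $\cC_0 \coloneqq e^{2C_1 T/p}$, which depends only on the stated parameters and, crucially, not on $R$.

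The only genuinely delicate point is justifying the use of It\^o's formula for $\norm{\cdot}^p$ when $p$ is not an even integer: the function is $C^2$ on $\real^d$ (indeed $C^\infty$ away from the origin, and when $p \ge 2$ the singularity at $0$ is removable at the $C^2$ level for $p > 2$, while $p = 2$ is smooth), so this is not a real obstacle; alternatively one can apply It\^o to $(\varepsilon + \norm{\cdot}^2)^{p/2}$ and let $\varepsilon \downarrow 0$ using \eqref{eq:TruncSolFiniteMoments} and dominated convergence. A secondary technical care is ensuring the local martingale part is a true martingale, which again follows from the a priori moment bound \eqref{eq:TruncSolFiniteMoments} established in Proposition~\ref{prop:WellPosednessRTrunc}. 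Everything else is the routine drift/diffusion estimate sketched above, and the uniformity in $R$ is automatic because $0 \le \varphi_R \le 1$ is the only property of $\varphi_R$ that enters.
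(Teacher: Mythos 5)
Your proposal is correct and follows essentially the same route as the paper: It\^o's formula applied to $\psi(x)=\norm{x}^p$, the bound $0\le\varphi_R\le1$ combined with the sublinearity estimate \eqref{eq:SubConsensusPoint} to control the truncated consensus point uniformly in $R$, closing the resulting inequality in terms of $\gm_p^p(\rho^R_t)$, and concluding by Gr\"onwall with a constant independent of $R$. The only cosmetic differences are that the paper treats the cross terms via H\"older's inequality against $\rho^R_t$ and bounds the trace term through the matrix square root of $H_\psi$ and the Laplacian $\Delta\psi(x)=p(p-2+d)\norm{x}^{p-2}$, whereas you use pointwise Young inequalities and an operator-norm bound on the Hessian.
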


\begin{proof} Since, by hypothesis, $p \ge 2 \vee p_\cM(s,\ell)$, we can apply It\={o}'s formula to the $C^2(\real^d)$ function $\psi(x) = \norm{x}^p$ and to the process $\barX^R$ for all fixed $R>0$ to obtain 
\begin{equation*}
    \frac{\de}{\de t}\gm_p^p(\rho^R_t) = \int_{\real^d} \langle \tv^R_{\rho^R_t}(x),\nabla\psi(x) \rangle\,\de\rho^R_t(x) + \frac{1}{2}\int_{\real^d}\trace\Bigl(\tsigma^R_{\rho^R_t}(x)^\top H_\psi(x)\tsigma^R_{\rho^R_t}(x)\Bigr)\,\de\rho^R_t(x)
\end{equation*}
(see \eqref{eq:momentum_def} for the definition of $\gm_p$); since $\nabla\psi(x) = p\norm{x}^{p-2} x$, we get  
\begin{equation*}
\begin{split}
    \frac{\de}{\de t}\gm_p^p(\rho^R_t) = \int_{\real^d}p\norm{x}^{p-2}&\langle \tv^R_{\rho^R_t}(x), x\rangle\,\de\rho^R_t(x) \\
    &+ \frac{1}{2}\int_{\real^d}\trace\Bigl(\tsigma^R_{\rho^R_t}(x)^\top H_\psi(x)\tsigma^R_{\rho^R_t}(x)\Bigr)\,\de\rho^R_t(x) \eqqcolon \mathrm{I}_t + \mathrm{II}_t.
\end{split}
\end{equation*}
By recalling the form \eqref{eq:R_truncated_fields} of the $R$-truncated fields, we can now estimate the two terms $\mathrm{I}_t$, $\mathrm{II}_t$. Let us begin with the quantity $\mathrm{I}_t$. By Cauchy-Schwarz inequality, triangle inequality, and the fact that the truncation function $\varphi_R$ satisfies $0\le\varphi_R\le1$, we have that
\begin{equation*}
\begin{split}
    \mathrm{I}_t &\le p\int_{\real^d}\norm{x}^{p-1}\norm{\tv^R_{\rho^R_t}(x)}\,\de\rho^R_t(x) \le \lambda p\int_{\real^d}\norm{x}^{p-1}\bigl(\norm{x} + \norm{\varphi_R(\rho^R_t)\cM_\beta(\rho^R_t)}\bigr)\,\de\rho^R_t(x) \\
    &\le \lambda p \biggl(\gm_p^p(\rho^R_t) + \int_{\real^d}\norm{x}^{p-1}\norm{\cM_\beta(\rho^R_t)}\,\de\rho^R_t(x)\biggr).
\end{split}
\end{equation*}
Let us now focus on the last integral. By H\"{o}lder inequality applied with exponents $p$ and $\frac{p}{p-1}$, and growth estimate \eqref{eq:SubConsensusPoint}, we have
\begin{equation*}
\begin{split}
&\int_{\real^d}\norm{x}^{p-1}\norm{\cM_\beta(\rho^R_t)}\,\de\rho^R_t(x)  \le \biggl(\int_{\real^d} \norm{x}^p\,\de\rho^R_t(x)\biggr)^{\frac{p-1}{p}}\biggl(\int_{\real^d} \norm{\cM_\beta(\rho^R_t)}^p\,\de\rho^R_t(x)\biggr)^\frac{1}{p} \\
&= \bigl(\gm_p(\rho^R_t)\bigr)^{p-1}\cdot\norm{\cM_\beta(\rho^R_t)} \le \bigl(\gm_p(\rho^R_t)\bigr)^{p-1}\cdot \cC_\cM\,\gm_p(\rho^R_t) = \cC_\cM\,\gm_p^p(\rho^R_t).
\end{split}
\end{equation*}
Hence, we can write that for all $t \in [0,T]$
\begin{equation*}
    \mathrm{I}_t \le \lambda p \bigl( 1 + \cC_\cM\bigr) \gm_p^p(\rho^R_t).
\end{equation*}
Let us now address the term $\mathrm{II}_t$. Since, under our hypotheses, $\psi$ is of class $C^2(\real^d)$ and convex, its Hessian matrix $H_\psi(x)$ is symmetric and positive semidefinite at every point $x \in \real^d$; therefore, its matrix square root is well-defined and we can write 
\begin{equation*}
    H_\psi(x) = \Bigl(\sqrt{H_\psi(x)}\Bigr)^{\!\top} \sqrt{H_\psi(x)}, \qquad \text{for all } x \in \real^d.
\end{equation*}
Whence, by recalling the representation formula for the Frobenius norm $\norm{A}_F^2 = \trace(A^\top A)$, and using its submultiplicative property, namely $\norm{AB}_F \le \norm{A}_F\norm{B}_F$, 
for all $A,B \in \real^{d\times d}$, we can estimate the integrand defining $\mathrm{II}_t$ as
\begin{equation*}
\begin{split}
    &\,\trace\Bigl(\tsigma^R_{\rho^R_t}(x)^\top H_\psi(x)\tsigma^R_{\rho^R_t}(x)\Bigr) = \trace\biggl(\Bigl(\sqrt{H_\psi(x)}\,\tsigma^R_{\rho^R_t}(x) \Bigr)^{\!\top}\sqrt{H_\psi(x)}\,\tsigma^R_{\rho^R_t}(x)\biggr) \\
    =&\, \Bigl\lVert\sqrt{H_\psi(x)}\,\tsigma^R_{\rho^R_t}(x)\Bigr\rVert_F^2 \le \Bigl\lVert\sqrt{H_\psi(x)}\Bigr\rVert_F^2 \Bigl\lVert\tsigma^R_{\rho^R_t}(x)\Bigr\rVert_F^2 = \trace H_\psi(x) \cdot \Bigl\lVert\tsigma^R_{\rho^R_t}(x)\Bigr\rVert_F^2 \\
    =&\, \Delta\psi(x) \cdot \Bigl\lVert\tsigma^R_{\rho^R_t}(x)\Bigr\rVert_F^2 = p(p - 2 + d)\norm{x}^{p-2}\cdot \Bigl\lVert\tsigma^R_{\rho^R_t}(x)\Bigr\rVert_F^2\,,
\end{split}
\end{equation*}
where we have made explicit the Laplacian $\Delta\psi(x) = p(p- 2 + d)\norm{x}^{p-2}$ in the last equality.
We now let $K \coloneqq p(p- 2 + d)/2$. As a consequence of the previous estimate and of the assumed properties of $S$ (see Remark~\ref{rem:S_Lip}), we deduce that
\begin{equation*}
\begin{split}
    \mathrm{II}_t &\le K \int_{\real^d} \norm{x}^{p-2} \Bigl\lVert\tsigma^R_{\rho^R_t}(x)\Bigr\rVert_F^2\,\de\rho^R_t(x) \le K L_S^2 \int_{\real^d} \norm{x}^{p-2}\bigl(\norm{x} + \norm{\cM_\beta(\rho^R_t)}\bigr)^2 \,\de\rho^R_t(x) \\
    &\le 2KL_S^2 \biggl(\gm_p^p(\rho^R_t) + \int_{\real^d}\norm{x}^{p-2}\norm{\cM_\beta(\rho^R_t)}^2\,\de\rho^R_t(x)\biggr).
\end{split}
\end{equation*}
Now, by using H\"{o}lder inequality with exponents $\frac{p}{2}$ and $\frac{p}{p-2}$ and property \eqref{eq:SubConsensusPoint}, we can estimate the last integral as follows
\begin{equation*}
\begin{split}
&\int_{\real^d}\norm{x}^{p-2}\norm{\cM_\beta(\rho^R_t)}^2\,\de\rho^R_t(x)  \le \biggl(\int_{\real^d} \norm{x}^p\,\de\rho^R_t(x)\biggr)^\frac{p-2}{p}\biggl(\int_{\real^d} \norm{\cM_\beta(\rho^R_t)}^p\,\de\rho^R_t(x)\biggr)^{\frac{2}{p}} \\
&= \bigl(\gm_p(\rho^R_t)\bigr)^{p-2}\cdot\norm{\cM_\beta(\rho^R_t)}^2 \le \bigl(\gm_p(\rho^R_t)\bigr)^{p-2}\cdot \cC_\cM^2\,\gm_p^2(\rho^R_t) = \cC_\cM^2\,\gm_p^p(\rho^R_t).
\end{split}
\end{equation*}
Therefore, for all $t \in [0,T]$, we have
\begin{equation*}
    \mathrm{II}_t \le p(p - 2 + d)L_S^2\bigl(1 + \cC_\cM^2 \bigr)\gm_p^p(\rho^R_t).
\end{equation*}
Hence, letting $\cC_1 \coloneqq \lambda p (1+\cC_\cM) + p(p - 2 + d)L_S^2(1 + \cC_\cM^2)$, we finally obtain that
\begin{equation*}
    \frac{\de}{\de t}\gm_p^p(\rho^R_t) \le \cC_1\, \gm_p^p(\rho^R_t), \qquad \text{for all }t \in [0,T].
\end{equation*}
Since, by virtue of \eqref{eq:TruncSolFiniteMoments}, the function $t \mapsto \gm_p^p(\rho^R_t) = \E\bigl[\norm{\barX^R_t}^p\bigr]$ is bounded over $[0,T]$, we can apply Gr\"{o}nwall inequality to infer that
\begin{equation*}
    \gm_p^p(\rho^R_t) \le e^{\cC_1 t}\,\gm_p^p(\rho_0), \qquad \text{for all }t \in [0,T].
\end{equation*}
By taking the $p$-th root of both sides and the supremum over $[0,T]$, inequality \eqref{eq:UniformEstimateOnMoments} follows with $\cC_0 \coloneqq (e^{\cC_1 T})^{1/p}$.
\hfill    
\end{proof}

By virtue of the previous estimate \eqref{eq:UniformEstimateOnMoments}, it is enough to consider $\barR > 0$ such that  
\begin{equation*}\label{eq:ChoiceOfBarR}
    \sup_{t \in [0,T]} \gm_p(\rho^{\barR}_t) \le \cC_0 \gm_p(\rho_0) \le \barR
\end{equation*}
to deduce that $T_{\barR} = T$, so that the process $\barX \coloneqq \barX^{\barR}$ is a strong solution to the mean-field CBO equation \eqref{eq:mean_field_CBO} on the whole interval $[0,T]$. The existence part in Theorem~\ref{thm:MainResult} is completely proved.

\section{An \emph{a priori} estimate and the proof of uniqueness}\label{sec:Uniqueness}
In this section we address the problem of pathwise uniqueness of the solution to \eqref{eq:mean_field_CBO}. The result will be obtained by combining the uniqueness result for the $R$-truncated problem \eqref{eq:R_truncated_CBO} with an \emph{a priori} estimate on the $p$-th moments of the solutions to \eqref{eq:mean_field_CBO}.

\begin{prop}\label{prop:APrioriEstimateCBO} Let $f \in \cO(s,\ell)$, with $s,\ell\ge0$, and let $\barX_0 \colon \Omega \to \real^d$ be an $\sF_0$-measurable random variable belonging to $L^p(\Omega,\sF,\prob)$, with $p \ge 2\vee p_\cM(s,\ell)$. Then any strong solution to problem \eqref{eq:mean_field_CBO} with initial state $\barX_0$, and such that the map $t \mapsto \cM_\beta(\rho_t)$ is bounded over $[0,T]$, satisfies the following \emph{a priori} estimate
\begin{equation}\label{eq:APrioriEstimateCBO}
\E\biggl[\sup_{t\in[0,T]} \norm{\barX_t}^p\biggr] \le C_\rho\Bigl(1 + \E\bigl[\norm{\barX_0}^p\bigr]\Bigr),
\end{equation}
where the constant $C_\rho > 0$ depends only on $\lambda$, $L_S$, $p$, $T$, and $\sup_{t\in[0,T]}\norm{\cM_\beta(\rho_t)}$.
\end{prop}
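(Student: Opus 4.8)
The plan is to work directly with the integral form of \eqref{eq:mean_field_CBO} and to derive a Gr\"{o}nwall-type inequality for the map $t \mapsto \E\bigl[\sup_{s\in[0,t]}\norm{\barX_s}^p\bigr]$. The only genuine subtlety is that, a priori, a strong solution $\barX$ is only known to have $\prob$-a.s.\ continuous — hence $\prob$-a.s.\ bounded — trajectories on the compact interval $[0,T]$, but \emph{not} that this supremum is integrable; consequently the estimate must first be derived along the localizing sequence of stopping times
\begin{equation*}
    \tau_n \coloneqq \inf\bigl\{t \in [0,T] \,:\, \norm{\barX_t} \ge n\bigr\}, \qquad n \in \nat_+,
\end{equation*}
(with the convention $\tau_n \coloneqq T$ when the set above is empty), for which $\sup_{s\in[0,t]}\norm{\barX_{s\wedge\tau_n}} \le n$ by continuity of the paths, and then passed to the limit $n\to\infty$. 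Throughout, we abbreviate $M \coloneqq \sup_{t\in[0,T]}\norm{\cM_\beta(\rho_t)}$, which is finite by hypothesis, and we use that $\E[\norm{\barX_0}^p] < \infty$ since $\barX_0 \in L^p(\Omega,\sF,\prob)$.

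First I would write, for $t \in [0,T]$,
\begin{equation*}
    \barX_{t\wedge\tau_n} = \barX_0 - \lambda\int_0^{t\wedge\tau_n}\bigl(\barX_s - \cM_\beta(\rho_s)\bigr)\,\de s + \int_0^{t\wedge\tau_n} S\bigl(\barX_s - \cM_\beta(\rho_s)\bigr)\,\de\barB_s,
\end{equation*}
take $\sup_{s\in[0,t]}$, raise to the $p$-th power, and split into three contributions via $(a+b+c)^p \le 3^{p-1}(a^p+b^p+c^p)$, estimating $\norm{\barX_s - \cM_\beta(\rho_s)}^p \le 2^{p-1}(\norm{\barX_s}^p + M^p)$ throughout. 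The drift contribution is controlled by H\"{o}lder's inequality in time, $\bigl(\int_0^{t\wedge\tau_n}\norm{\barX_s - \cM_\beta(\rho_s)}\,\de s\bigr)^p \le T^{p-1}\int_0^{t\wedge\tau_n}\norm{\barX_s - \cM_\beta(\rho_s)}^p\,\de s$. For the stochastic term I would invoke the Burkholder--Davis--Gundy inequality (with a constant $C_p$ depending only on $p$), in the form
\begin{equation*}
    \E\biggl[\sup_{s\in[0,t]}\Bigl\lVert\int_0^{s\wedge\tau_n} S\bigl(\barX_r - \cM_\beta(\rho_r)\bigr)\,\de\barB_r\Bigr\rVert^p\biggr] \le C_p\,\E\biggl[\Bigl(\int_0^{t\wedge\tau_n}\bigl\lVert S\bigl(\barX_s - \cM_\beta(\rho_s)\bigr)\bigr\rVert_F^2\,\de s\Bigr)^{p/2}\biggr],
\end{equation*}
and then bound $\norm{S(u)}_F \le L_S\norm{u}$ (using $S(0) = 0_{d\times d}$ together with Remark~\ref{rem:S_Lip}), followed by one more application of H\"{o}lder's inequality with exponent $p/2 \ge 1$.

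Collecting the three contributions and applying Fubini's theorem, I would arrive — with $g_n(t) \coloneqq \E\bigl[\sup_{s\in[0,t]}\norm{\barX_{s\wedge\tau_n}}^p\bigr]$ — at an inequality of the form
\begin{equation*}
    g_n(t) \le C_1\bigl(1 + \E[\norm{\barX_0}^p]\bigr) + C_2\int_0^t g_n(s)\,\de s, \qquad t\in[0,T],
\end{equation*}
where $C_1$ depends only on $\lambda$, $L_S$, $p$, $T$, $M$, and $C_2$ only on $\lambda$, $L_S$, $p$, $T$. The step I expect to be the main obstacle — although a rather mild one — is precisely ensuring that Gr\"{o}nwall's inequality applies: this is legitimate here because $g_n(t) \le n^p < \infty$ by construction of $\tau_n$. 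Applying it then gives $g_n(t) \le C_1\bigl(1 + \E[\norm{\barX_0}^p]\bigr)\,e^{C_2 T}$ for all $t\in[0,T]$, uniformly in $n$.

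It only remains to remove the localization. Since $\barX$ has $\prob$-a.s.\ continuous, hence bounded, trajectories on $[0,T]$, for $\prob$-a.e.\ $\omega$ there is $n_0(\omega)$ with $\tau_n(\omega) = T$ for all $n \ge n_0(\omega)$; as $n \mapsto \tau_n$ is non-decreasing, it follows that $\sup_{s\in[0,T]}\norm{\barX_{s\wedge\tau_n}}^p \uparrow \sup_{s\in[0,T]}\norm{\barX_s}^p$ $\prob$-a.s.\ as $n\to\infty$, so the Monotone Convergence Theorem yields
\begin{equation*}
    \E\biggl[\sup_{s\in[0,T]}\norm{\barX_s}^p\biggr] = \lim_{n\to\infty} g_n(T) \le C_1 e^{C_2 T}\bigl(1 + \E[\norm{\barX_0}^p]\bigr).
\end{equation*}
Setting $C_\rho \coloneqq C_1 e^{C_2 T}$ — which, by the above, depends only on $\lambda$, $L_S$, $p$, $T$, and $M = \sup_{t\in[0,T]}\norm{\cM_\beta(\rho_t)}$ — proves \eqref{eq:APrioriEstimateCBO}.
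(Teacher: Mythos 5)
Your proposal is correct, and it rests on the same key observation as the paper's proof: once $t \mapsto \cM_\beta(\rho_t)$ is assumed bounded by $M$, the drift and diffusion in \eqref{eq:fieldsCBO} satisfy linear growth bounds $\norm{v_{\rho_t}(x)} \le \lambda(M + \norm{x})$ and $\norm{\sigma_{\rho_t}(x)}_F \le L_S(M + \norm{x})$ uniformly in time, so the problem reduces to the standard $p$-th moment estimate for SDEs with sublinear coefficients. The difference is only in how that standard estimate is handled: the paper simply verifies the sublinearity \eqref{eq:SubEstimatesFieldsCBO} and then cites \cite[Theorem 9.1]{Stochastic-Calculus}, whereas you reprove the estimate from scratch via the integral formulation, the decomposition $(a+b+c)^p \le 3^{p-1}(a^p+b^p+c^p)$, H\"older in time, the Burkholder--Davis--Gundy inequality, localization by the stopping times $\tau_n$, Gr\"onwall, and monotone convergence. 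Your route is longer but self-contained, and it makes explicit the one genuine subtlety (the a priori integrability of $\sup_t\norm{\barX_t}^p$ needed for Gr\"onwall), which the citation hides inside the referenced theorem; the constants you track have exactly the dependence claimed in the statement.

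One small inaccuracy in the localization step: the bound $\sup_{s\in[0,t]}\norm{\barX_{s\wedge\tau_n}} \le n$ fails on the event $\{\norm{\barX_0} \ge n\}$, where $\tau_n = 0$ and the stopped process equals $\barX_0$. The correct bound is $\sup_{s\in[0,t]}\norm{\barX_{s\wedge\tau_n}}^p \le n^p + \norm{\barX_0}^p$, which still gives $g_n(t) \le n^p + \E\bigl[\norm{\barX_0}^p\bigr] < \infty$ because $\barX_0 \in L^p(\Omega,\sF,\prob)$, so Gr\"onwall's inequality applies exactly as you intend and the rest of the argument is unaffected.
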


\begin{proof} 
Since by assumption the map $t \mapsto \cM_\beta(\rho_t)$ is bounded over $[0,T]$, the fields~$v$ and~$\sigma$ (see \eqref{eq:fieldsCBO}) driving the McKean--Vlasov equation \eqref{eq:mean_field_CBO} satisfy the following sublinearity estimates, uniformly in time:
\begin{equation}\label{eq:SubEstimatesFieldsCBO}
    \norm{v_{\rho_t}(x)} \le M_v(1 + \norm{x}), \quad \norm{\sigma_{\rho_t}(x)}_F \le M_\sigma(1 + \norm{x}), \quad \text{for all } (x,t) \in \real^d \times [0,T], 
\end{equation}
with $M_v \coloneqq \lambda \max\{1,M_{\rho}\}$ and $M_\sigma \coloneqq L_S \max\{1,M_{\rho}\} $, where $M_{\rho} \coloneqq \sup_{t \in [0,T]}\norm{\cM_\beta(\rho_t)}$. Indeed, we readily have that, for all $(x,t) \in \real^d \times [0,T]$
\begin{align*}
    \norm{v_{\rho_t}(x)} &\le \lambda(\norm{\cM_\beta(\rho_t)} + \norm{x}) \le \lambda (M_\rho + \norm{x}), \\
    \norm{\sigma_{\rho_t}(x)}_F &\le L_S(\norm{\cM_\beta(\rho_t)} + \norm{x}) \le L_S(M_\rho + \norm{x}),
\end{align*}
 and the claim follows (we observe that the considerations of Remark~\ref{rem:S_Lip} have been exploited in the first inequality in the estimate for $\sigma$). Therefore, by virtue of the sublinearity estimates \eqref{eq:SubEstimatesFieldsCBO}, we can invoke the result of \cite[Theorem 9.1]{Stochastic-Calculus} to deduce the \emph{a priori} estimate \eqref{eq:APrioriEstimateCBO}, with $C_\rho$ depending only on $\lambda$, $L_S$, $p$, $T$, and $M_\rho$.
\hfill    
\end{proof}

Finally, we are in a position to prove the pathwise uniqueness result of Theorem~\ref{thm:MainResult}.

\begin{cor}\label{cor:PathwiseUniquenessCBO} Let us assume that the hypotheses of Proposition \ref{prop:APrioriEstimateCBO} are satisfied. Pathwise uniqueness for problem \eqref{eq:mean_field_CBO} holds in the class of strong solutions such that the map $t \mapsto \cM_\beta(\rho_t)$ is bounded over $[0,T]$.
\end{cor}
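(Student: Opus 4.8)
The plan is to reduce pathwise uniqueness for \eqref{eq:mean_field_CBO} to the pathwise uniqueness already established for the $R$-truncated problem \eqref{eq:R_truncated_CBO} in Proposition~\ref{prop:WellPosednessRTrunc}, by showing that any two admissible solutions solve, for a suitable common $R$, exactly that truncated equation.

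First I would take two strong solutions $\barX$ and $\barY$ to \eqref{eq:mean_field_CBO} driven by the same Brownian motion $\barB$ and issued from the same initial state $\barX_0 \in L^p(\Omega,\sF,\prob)$, and such that both $t \mapsto \cM_\beta(\rho_t)$ and $t \mapsto \cM_\beta(\tilde\rho_t)$ are bounded over $[0,T]$, where $\rho_t \coloneqq \Law(\barX_t)$ and $\tilde\rho_t \coloneqq \Law(\barY_t)$. Applying the \emph{a priori} estimate of Proposition~\ref{prop:APrioriEstimateCBO} to each of the two solutions produces a finite constant $M$ with $\E\bigl[\sup_{t\in[0,T]}\norm{\barX_t}^p\bigr]\le M$ and $\E\bigl[\sup_{t\in[0,T]}\norm{\barY_t}^p\bigr]\le M$; since $\gm_p(\rho_t)^p = \E[\norm{\barX_t}^p] \le M$ for every $t$, and analogously for $\tilde\rho_t$, I obtain the uniform-in-time bounds $\sup_{t\in[0,T]}\gm_p(\rho_t) \le M^{1/p}$ and $\sup_{t\in[0,T]}\gm_p(\tilde\rho_t) \le M^{1/p}$.

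Next I would fix any $R \ge M^{1/p}$. By the defining property \eqref{eq:RequirementTruncation} of the cut-off, $\varphi_R(\rho_t) = \varphi_R(\tilde\rho_t) = 1$ for all $t \in [0,T]$, hence $\varphi_R(\rho_t)\,\cM_\beta(\rho_t) = \cM_\beta(\rho_t)$ and $\varphi_R(\tilde\rho_t)\,\cM_\beta(\tilde\rho_t) = \cM_\beta(\tilde\rho_t)$ along the two solutions; comparing with \eqref{eq:R_truncated_CBO}, this shows that both $\barX$ and $\barY$ are strong solutions to the $R$-truncated problem with initial state $\barX_0$, and the moment bounds above guarantee that both satisfy \eqref{eq:TruncSolFiniteMoments}. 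Since Proposition~\ref{prop:WellPosednessRTrunc} asserts pathwise uniqueness in exactly this class, I conclude that $\barX = \barY$ $\prob$-almost surely in $C([0,T],\real^d)$, which is the claim.

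The only genuinely delicate point is the extraction of a \emph{single} truncation level $R$ that simultaneously deactivates $\varphi_R$ along both trajectories on all of $[0,T]$; this is where the uniformity in $t$ of the bound supplied by Proposition~\ref{prop:APrioriEstimateCBO} is essential, since it is precisely what brings \eqref{eq:mean_field_CBO} back into the scope of the already-settled $R$-truncated problem. The remaining steps are a matter of matching definitions and carry no real difficulty.
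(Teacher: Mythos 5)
Your proposal is correct and follows essentially the same route as the paper: use the \emph{a priori} estimate of Proposition~\ref{prop:APrioriEstimateCBO} to obtain a uniform-in-time bound on $\gm_p$ along both solutions, pick a single truncation level $R$ exceeding it so that $\varphi_R \equiv 1$ along both laws, and then invoke the pathwise uniqueness of the $R$-truncated problem from Proposition~\ref{prop:WellPosednessRTrunc}. The only cosmetic difference is that you take $R \ge M^{1/p}$ with a non-strict inequality, which is equally fine given the definition \eqref{eq:RequirementTruncation} of the cut-off.
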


\begin{proof} Let us consider two strong solutions $\barX^1$, $\barX^2$ to problem \eqref{eq:mean_field_CBO}, relative to the same initial datum $\barX_0 \in L^p(\Omega,\sF,\prob)$ (with $p \ge 2\vee p_\cM(s,\ell)$) and to the same Brownian motion $\barB$. Moreover, denoting by $\rho^1_t$, $\rho^2_t$ their respective law at time $t$, we assume that the two maps $t \mapsto \cM_\beta(\rho^i_t)$, $i = 1,2$, are both bounded over $[0,T]$.
We observe that, as a consequence of Proposition~\ref{prop:APrioriEstimateCBO}, we can choose $\barR > 0$ large enough so that
\begin{equation*}
\sup_{t\in[0,T]}\gm_p(\rho^i_t) \le \E\biggl[\sup_{t\in[0,T]} \norm{\barX^i_t}^p\biggr]^{1/p} \le \biggl[C_{\rho^i}\Bigl(1 + \E\bigl[\norm{\barX_0}^p\bigr]\Bigr)\biggr]^{1/p} < \barR,
\end{equation*}
for $i = 1,2$. Hence, the processes $\barX^1$, $\barX^2$ can also be  regarded as two solutions to the $\barR$-truncated problem \eqref{eq:R_truncated_CBO}. Indeed, the truncation function $\varphi_{\barR}$ is always equal to $1$ along the law of these two solutions, and the truncated problem \eqref{eq:R_truncated_CBO} coincides with the original CBO \eqref{eq:mean_field_CBO}. Since $\barX^1$, $\barX^2$ necessarily satisfy condition \eqref{eq:TruncSolFiniteMoments}, the pathwise uniqueness result for the $\barR$-truncated problem (see Proposition \ref{prop:WellPosednessRTrunc}) implies the desired pathwise uniqueness result for the mean-field CBO equation \eqref{eq:mean_field_CBO}.
\hfill    
\end{proof}

\appendix
\section{Sznitman's argument of well-posedness}\label{app:SzinitmanProof}
We begin this appendix by recalling a useful $L^p$ estimate for It\={o} stochastic integrals (see, e.g., \cite[Proposition 8.4]{Stochastic-Calculus} and \cite[page 116]{SV}).

\begin{thm}[Burkholder--Davis--Gundy inequality] 
Let $(B_t)_{t\in[0,T]}$ be a standard $\real^m$-valued Brownian motion and let $(G_t)_{t\in[0,T]}$ be a progressively measurable, $\real^{d\times m}$-valued stochastic process, both defined on a common filtered probability space $\bigl(\Omega, \sF, (\mathscr{F}_t)_{t\in[0,T]},\prob\bigr)$.
Let $p \ge 2$ and suppose that $\int_0^T\,\norm{G_t}_{F}^p\,\de t < +\infty$, $\prob$-almost surely, then, for all $t \in [0,T]$, the following inequality holds:
\begin{equation}\label{eq:BDG}
    \E\biggl[\sup_{u\in[0,t]}\norm*{\int_0^u G_s\,\de B_s}^p\biggr] \le c_p\, t^\frac{p-2}{2}\, \E\biggl[\int_0^t\norm{G_s}_{F}^p\,\de s\biggr],
\end{equation}
where $c_p = \bigl[\frac{1}{2}\, p^{p+1} (p-1)^{1-p}\bigr]^\frac{p}{2}$.
\end{thm}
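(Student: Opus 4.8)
\section*{Proof proposal}

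The plan is to deduce \eqref{eq:BDG} from It\^o's formula applied to $x \mapsto \norm{x}^p$, combined with Doob's $L^p$ maximal inequality and a self-improvement trick, keeping careful track of the constants so that the final factor collapses to exactly $c_p$. Write $M_t \coloneqq \int_0^t G_s\,\de B_s$; this is a continuous local martingale with quadratic variation $[M]_t = \int_0^t \norm{G_s}_F^2\,\de s$. Since the hypothesis only grants $\int_0^T \norm{G_t}_F^p\,\de t < +\infty$ almost surely, and no integrability of $M$ is known a priori, I would first localise: for $n \in \nat$ set
\[
\tau_n \coloneqq \inf\Bigl\{t \in [0,T] \,:\, \norm{M_t} \ge n \ \text{ or }\ \int_0^t \norm{G_s}_F^2\,\de s \ge n\Bigr\},
\]
with the convention $\inf\emptyset = T$, prove the inequality for the stopped martingale $M^{\tau_n}$ (which is bounded, hence a genuine square-integrable martingale, so all the expectations below are finite), and recover the general statement at the very end by monotone convergence, using that $\tau_n \uparrow T$ almost surely.

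For the core estimate, apply It\^o's formula to $\psi(x) = \norm{x}^p$ (which, since $p \ge 2$, is of class $C^2(\real^d)$) along the path of $M$. With $H_\psi(x) = p\norm{x}^{p-2}I_d + p(p-2)\norm{x}^{p-4}xx^\top$, the second-order term evaluated against $\de[M]_s = G_sG_s^\top\,\de s$ equals $\tfrac12\bigl(p\norm{M_s}^{p-2}\norm{G_s}_F^2 + p(p-2)\norm{M_s}^{p-4}\norm{G_s^\top M_s}^2\bigr)\,\de s$, and from the operator-norm bound $\norm{G_s^\top M_s} \le \norm{G_s}_F\norm{M_s}$ this is at most $\tfrac12 p(p-1)\norm{M_s}^{p-2}\norm{G_s}_F^2\,\de s$. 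The first-order term is a stochastic integral which, for the stopped process, is a martingale and hence has zero expectation. Taking expectations yields
\[
\E\bigl[\norm{M_{t\wedge\tau_n}}^p\bigr] \le \frac{p(p-1)}{2}\,\E\biggl[\int_0^{t\wedge\tau_n}\norm{M_s}^{p-2}\,\norm{G_s}_F^2\,\de s\biggr].
\]

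Next, invoke Doob's $L^p$ maximal inequality for the non-negative submartingale $\norm{M_{\cdot\wedge\tau_n}}$, namely $\E[\sup_{u\in[0,t]}\norm{M_{u\wedge\tau_n}}^p] \le (p/(p-1))^p\,\E[\norm{M_{t\wedge\tau_n}}^p]$. Combining with the previous display, bounding $\norm{M_s}^{p-2}$ inside the integral by $\sup_{u\in[0,t]}\norm{M_{u\wedge\tau_n}}^{p-2}$ and extending the integral to $[0,t]$, and applying H\"older's inequality with exponents $\tfrac{p}{p-2}$ and $\tfrac{p}{2}$, I obtain
\[
\E\Bigl[\sup_{u\in[0,t]}\norm{M_{u\wedge\tau_n}}^p\Bigr] \le C_p\,\E\Bigl[\sup_{u\in[0,t]}\norm{M_{u\wedge\tau_n}}^p\Bigr]^{\frac{p-2}{p}}\E\biggl[\Bigl(\int_0^{t}\norm{G_s}_F^2\,\de s\Bigr)^{p/2}\biggr]^{2/p},
\]
with $C_p = \bigl(\tfrac{p}{p-1}\bigr)^p\tfrac{p(p-1)}{2} = \tfrac12\,p^{p+1}(p-1)^{1-p}$. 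The factor $\E[\sup_{u\in[0,t]}\norm{M_{u\wedge\tau_n}}^p]$ is finite (the claim being trivial if it is zero), so dividing by its $\tfrac{p-2}{p}$-th power and raising to the power $\tfrac p2$ turns $C_p$ into $c_p = C_p^{p/2}$. A final application of H\"older in the time variable, $\bigl(\int_0^t\norm{G_s}_F^2\,\de s\bigr)^{p/2} \le t^{(p-2)/2}\int_0^t\norm{G_s}_F^p\,\de s$, together with the passage $n \to \infty$ (monotone convergence on the left; on the right the stopped integral is dominated by $\int_0^t\norm{G_s}_F^p\,\de s$), gives exactly \eqref{eq:BDG}.

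I expect the main obstacle to be the localisation bookkeeping rather than the constant-chasing: one must verify that after stopping the stochastic-integral term genuinely has zero expectation, that every expectation in the self-improvement step is finite so that the division is legitimate, and that $\tau_n \uparrow T$ almost surely — which follows from the continuity of $M$ on $[0,T]$ and from $\int_0^T\norm{G_s}_F^2\,\de s \le T^{(p-2)/p}\bigl(\int_0^T\norm{G_s}_F^p\,\de s\bigr)^{2/p} < +\infty$. Once the chain ``It\^o $\to$ Doob $\to$ H\"older $\to$ self-improvement $\to$ H\"older in time'' is arranged in this order, the constant reduces to $c_p$ automatically.
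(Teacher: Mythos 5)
The paper itself does not prove this statement---it is quoted as a classical result with pointers to \cite[Proposition 8.4]{Stochastic-Calculus} and \cite[p.~116]{SV}---and your argument is exactly the standard proof behind this version of the inequality: localization, It\^o's formula for $\norm{x}^p$, Doob's $L^p$ maximal inequality, H\"older, the self-improvement step, and a final H\"older in time, with the constant bookkeeping correctly reproducing $c_p=\bigl[\tfrac12\,p^{p+1}(p-1)^{1-p}\bigr]^{p/2}$ and the factor $t^{(p-2)/2}$. The only cosmetic remark is that for $p=2$ the H\"older/self-improvement step with exponents $\tfrac{p}{p-2}$ and $\tfrac{p}{2}$ degenerates, but in that case the claim follows directly from Doob's inequality and the It\^o isometry, so there is no genuine gap.
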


By means of this classical result, we can illustrate in detail the formulation of Sznitman's argument stated in Theorem~\ref{thm:SznitmanArgument}.

\begin{proof}[Proof of Theorem~\ref{thm:SznitmanArgument}.]
We first observe that, since $p \ge 2 \vee \barp$, inequalities \eqref{eq:GlobalLip} still hold with $\barp$ replaced by $p$, by the properties of the Wasserstein distance. Next, we fix $\Psi \in \cP_p(C([0,T],\real^d))$ and formulate the auxiliary time-dependent SDE \eqref{eq:AuxProblem}. Since the measure on path space $\Psi$ has finite $p$-th moment, as a consequence of the Dominated Convergence Theorem one has that the map $t \mapsto \Psi_t \coloneqq (\ev_t)_\sharp\Psi \in \cP_p(\real^d)$ is continuous over $[0,T]$ with respect to the metric $W_p$. Whence, by \eqref{eq:GlobalLip}, the (time dependent) fields driving the auxiliary equation \eqref{eq:AuxProblem} satisfy the following global Lipschitz estimates
\begin{equation}\label{eq:AuxGlobalLip}
    \norm{v_{\Psi_t}(x_1) - v_{\Psi_t}(x_2)} \le L_v \norm{x_1 - x_2}, \qquad \norm{\sigma_{\Psi_t}(x_1) - \sigma_{\Psi_t}(x_2)}_F \le L_\sigma \norm{x_1 - x_2},
\end{equation}
for $x_1, x_2 \in \real^d$ and $t \in [0,T]$, as well as the sublinearity estimates
\begin{equation}\label{eq:AuxGlobalSub}
    \norm{v_{\Psi_t}(x)} \le M^\Psi_v\bigl(1 + \norm{x}\bigr), \qquad \norm{\sigma_{\Psi_t}(x)}_F \le M^\Psi_\sigma \bigl(1 + \norm{x}\bigr),
\end{equation}
for $x \in \real^d$ and $t\in [0,T]$, where $M^\Psi_v, M^\Psi_\sigma > 0$ depend  on $\sup_{t \in [0,T]}W_p(\Psi_0,\Psi_t) < +\infty$. Therefore, by virtue of the standard theory of well-posedness for SDEs (see, e.g., \cite[Chapter 9]{Stochastic-Calculus}), estimates \eqref{eq:AuxGlobalLip}, \eqref{eq:AuxGlobalSub}, and the integrability assumption on the initial datum $\barX_0$ imply that equation \eqref{eq:AuxProblem} admits a pathwise unique strong solution $X^\Psi \colon \Omega \to C([0,T],\real^d)$ whose law belongs to $\cP_p(C([0,T],\real^d))$, hence, the map $\cS$ in \eqref{eq:DefS} is well-defined.

Now we prove that $\cS$ admits a unique fixed point in the complete and sparable metric space $(\cP_p(C([0,T],\real^d)), W_p)$. To this end, we introduce, for all $t \in [0,T]$, and for all measures $\Psi^1, \Psi^2 \in \cP_p(C([0,T],\real^d))$, the following quantity
\begin{equation}\label{eq:W_p_t}
    W_{p,t}(\Psi^1,\Psi^2) \coloneqq \biggl(\inf_{\pi \in \Gamma(\Psi^1,\Psi^2)} \int_{C([0,T],\real^d)^2} \sup_{u\in[0,t]} \norm{\varphi_1(u) - \varphi_2(u)}^p \, \de \pi(\varphi_1,\varphi_2)\biggr)^{1/p}.
\end{equation}
We observe that the infimum in \eqref{eq:W_p_t} is always achieved, by the standard theory of Optimal Transport on complete and sparable metric spaces (see, e.g.,\cite[Theorem~2.10]{Optimal-Transport-Ambrosio}). Moreover, if $s \le t$, then $W_{p,s} \le W_{p,t}$, and clearly $W_{p,T}$ coincides with the metric $W_p$.

Next, let us fix $\Psi^1, \Psi^2 \in \cP_p(C([0,T],\real^d))$, and let us denote by $X^1$ and $X^2$ the solutions to \eqref{eq:AuxProblem} associated to $\Psi^1$ and $\Psi^2$, respectively. By applying the elementary estimate $\norm{a + b}^p \le 2^{p-1}(\norm{a}^p + \norm{b}^p)$, H\"{o}lder inequality, the Burkholder--Davis--Gundy inequality \eqref{eq:BDG}, and the global Lipschitz estimates \eqref{eq:GlobalLip} we obtain, for all $t \in [0,T]$,
\begin{equation}\label{eq:HoldBDGLip}
\begin{split}
    &\E\biggl[\sup_{u\in[0,t]} \norm{X^1_u - X^2_u}^p\biggr] \le\, (2t)^{p-1}\E\biggl[\int_0^t\norm{v_{\Psi^1_s}(X^1_s) - v_{\Psi^2_s}(X^2_s)}^p\,\de s \biggr] \\
    &\,+ 2^{p-1}c_p t^\frac{p-2}{2}\,\E\biggl[\int_0^t\norm{\sigma_{\Psi^1_s}(X^1_s) - \sigma_{\Psi^2_s}(X^2_s)}_F^p\,\de s\biggr] \\
    \le&\, 2^{p-1}\Bigl((2t)^{p-1}L_v^p + 2^{p-1}c_p t^\frac{p-2}{2}L_\sigma^p\Bigr)\cdot \E\biggl[\int_0^t \bigl(\norm{X^1_s - X^2_s}^p + W_p^p(\Psi^1_s,\Psi^2_s)\bigr)\,\de s\biggr].
\end{split}
\end{equation}
Now we estimate the last integrand above. Let us fix $s \in [0,t]$, and let us consider a measure $\tilde{\pi} \in \Gamma(\Psi^1,\Psi^2)$ which is optimal for $W_{p,s}$.
If we consider the probability measure over $\real^d \times \real^d$ defined as $\tilde{\pi}_s \coloneqq (\ev_s,\ev_s)_\sharp \tilde{\pi} \in \Gamma(\Psi^1_s,\Psi^2_s)$, then we have that
\begin{equation*}
\begin{split}
    W_p^p(\Psi^1_s,\Psi^2_s) &\le \int_{(\real^d)^2}\norm{x_1 - x_2}^p\,\de\tilde{\pi}_s(x_1,x_2) = \int_{C([0,T],\real^d)^2}\norm{\varphi_1(s) - \varphi_2(s)}^p\,\de\tilde{\pi}(\varphi_1,\varphi_2) \\
    &\le \int_{C([0,T],\real^d)^2} \sup_{u\in[0,s]} \norm{\varphi_1(u) - \varphi_2(u)}^p \, \de \tilde{\pi}(\varphi_1,\varphi_2) = W_{p,s}^p(\Psi^1,\Psi^2),
\end{split}
\end{equation*}
where the last equality follows from the optimality of the transport plan $\tilde{\pi}$. 

Hence, if we let $\cK_0 \coloneqq 2^{p-1}\bigl((2t)^{p-1}L_v^p + 2^{p-1}c_p t^\frac{p-2}{2}L_\sigma^p\bigr)$, we have that, for all $t \in [0,T]$,
\begin{equation*}
    \E\biggl[\sup_{u\in[0,t]} \norm{X^1_u - X^2_u}^p\biggr] \le \cK_0 \biggl(\int_0^t \E\biggl[\sup_{u\in[0,s]} \norm{X^1_u - X^2_u}^p \biggr] \,\de s + \int_0^t W_{p,s}^p(\Psi^1,\Psi^2)\,\de s\biggr),
\end{equation*}
therefore, by Gr\"{o}nwall inequality (whose validity is ensured by the boundedness of the map $t \mapsto \E\bigl[\sup_{u\in[0,t]} \norm{X^1_u - X^2_u}^p\bigr]$ over $[0,T]$, and by the fact that the map $t \mapsto \int_0^t W_{p,s}^p(\Psi^1,\Psi^2)\,\de s \le T W_p^p(\Psi^1,\Psi^2)$ is non-decreasing and bounded), we conclude that
\begin{equation*}
    \E\biggl[\sup_{u\in[0,t]} \norm{X^1_u - X^2_u}^p\biggr] \le e^{\cK_0 t}\cK_0 \int_0^t W_{p,s}^p(\Psi^1,\Psi^2)\,\de s.
\end{equation*}
Finally, since $\Law(X^1,X^2) = (X^1,X^2)_\sharp\prob \in \Gamma(\cS(\Psi^1),\cS(\Psi^2))$, we obtain the estimate
\begin{equation}\label{eq:bottigliarotta}
    W_{p,t}^p(\cS(\Psi^1),\cS(\Psi^2)) \le \E\biggl[\sup_{u\in[0,t]} \norm{X^1_u - X^2_u}^p\biggr] \le \cK \int_0^t W_{p,s}^p(\Psi^1,\Psi^2)\,\de s
\end{equation}
for all $t \in [0,T]$, with $\cK \coloneqq e^{\cK_0 T}\cK_0$. From \eqref{eq:bottigliarotta}, we readily obtain the following set of inequalities, which hold for all $t \in [0,T]$ and $n \in \nat_+$
\begin{subequations}\label{eq:contraction_intermediate}
\begin{align}
    W_{p,t}^p(\cS(\Psi^1),\cS(\Psi^2)) &\le \cK \,t\, W_p^p(\Psi^1,\Psi^2), \label{eq:contraction_intermediate_1} \\
    W_{p,t}^p(\cS^n(\Psi^1),\cS^n(\Psi^2)) &\le \cK \int_0^t W_{p,s}^p(\cS^{n-1}(\Psi^1),\cS^{n-1}(\Psi^2))\,\de s; \label{eq:contraction_intermediate_n}
\end{align}
\end{subequations}
here, $\cS^n$ denotes the composition of $\cS$ with itself $n$ times. Therefore, by induction on $n$, we obtain the crucial estimate
\begin{equation*}
    W_{p,t}^p(\cS^n(\Psi^1),\cS^n(\Psi^2))
    \le \cK \int_0^t \frac{(\cK\,s)^{n-1}}{(n-1)!} W_p^p(\Psi^1,\Psi^2)\,\de s = \frac{(\cK\,t)^n}{n!}W_p^p(\Psi^1,\Psi^2),
\end{equation*}
whence, by taking $t = T$,
\begin{equation}\label{eq:contraction}
    W_p(\cS^n(\Psi^1),\cS^n(\Psi^2))
    \le \biggl(\frac{(\cK\,t)^n}{n!}\biggr)^{1/p}W_p(\Psi^1,\Psi^2).
\end{equation}
Since the quantity $((\cK T)^n/n!)^{1/p}<1$ for $n$ sufficiently large, estimate~\eqref{eq:contraction} implies that $\cS^n$ is a contraction on the complete metric space $\bigl(\cP_p(C([0,T],\real^d)),W_p\bigr)$. Hence, a classical variant of the Banach--Caccioppoli fixed-point theorem (see the original paper \cite{Bryant1968IteratedFixedPoint}, or again the review \cite[Theorem 5]{BanachHistory2024}) provides the existence of a unique fixed point $\rho \in \cP_p(C([0,T],\real^d))$ for $\cS$. As previously mentioned in Section~\ref{subsec:ClassicalSznitman}, the process $X^\rho$ solving the auxiliary SDE \eqref{eq:AuxProblem} associated to $\rho$ is a strong solution to the original McKean--Vlasov SDE \eqref{eq:McKean-Vlasov}, thus proving the existence result. Moreover, since $X^\rho$ solves a particular instance of \eqref{eq:AuxProblem}, condition \eqref{eq:SznitmanFiniteMoments} is automatically satisfied.

Now we prove pathwise uniqueness in the class of strong solutions to \eqref{eq:McKean-Vlasov} satisfying condition \eqref{eq:SznitmanFiniteMoments}. Let us consider two processes $\barX_1$, $\barX_2$ belonging to this class.
Through a computation completely analogous to \eqref{eq:HoldBDGLip}, we obtain that
\begin{equation}\label{eq:intermediateUniqueness}
    \E\biggl[\sup_{u\in[0,t]} \norm{\barX_{1,u} - \barX_{2,u}}^p\biggr] \le \cK_0 \E\biggl[\int_0^t \bigl(\norm{\barX_{1,u} - \barX_{2,u}}^p + W_p^p(\rho_{1,s},\rho_{2,s})\bigr)\,\de s\biggr]
\end{equation}
for all $t \in [0,T]$. Now, by virtue of the McKean--Vlasov condition $\rho_{i,t} = \Law(\barX_{i,t})$, we can deduce that
\begin{equation*}
    W_p^p(\rho_{1,s},\rho_{2,s}) \le \E\Bigl[\norm{\barX_{1,s} - \barX_{2,s}}^p\Bigr], 
\end{equation*}
hence, from \eqref{eq:intermediateUniqueness} we obtain, for all $t \in [0,T]$,
\begin{equation}
    \E\biggl[\sup_{u\in[0,t]} \norm{\barX_{1,u} - \barX_{2,u}}^p\biggr] \le 2\cK_0 \int_0^t \E\biggl[\sup_{u\in[0,s]} \norm{\barX_{1,u} - \barX_{2,u}}^p\biggr]\,\de s.
\end{equation}
Since condition \eqref{eq:SznitmanFiniteMoments} ensures that the map $t \mapsto \E\Bigl[\sup_{u\in[0,t]} \norm{\barX_{1,u} - \barX_{2,u}}^p\Bigr]$ is bounded over $[0,T]$, we can apply Gr\"{o}nwall inequality to conclude that
\begin{equation*}
    \E\biggl[\sup_{u\in[0,T]} \norm{\barX_{1,u} - \barX_{2,u}}^p\biggr] = 0,
\end{equation*}
and pathwise uniqueness follows. The theorem is completely proved.
\hfill    
\end{proof}
\smallskip
\noindent\textbf{Acknowledgments.} The author is a member of the GNAMPA group of INdAM (Istituto Nazionale di Alta Matematica).

\end{document}